  \newcommand{\bDoNotIncludePackages}{0}
  \newcommand{\bSkipDocumentSetting}{0}
  \newcommand{\bDoNotDefineTheorems}{0}
\def\N{\mathbb N}
\def\A{\mathcal A}
\def\B{\mathcal B}
\def\C{\mathcal C}
\def\P{\mathcal P}
\def\S{\mathcal S}
\def\mF{\mathcal F}
\def\E{\mathcal E}
\def\P{\mathcal P}
\def\L{\mathcal L}
\def\Lu{{\mathcal L}(\uu)}
\def\Lv{{\mathcal L}(\vv)}
\def\uu{\mathbf u}
\def\vv{\mathbf v}
\def\ww{\mathbf w}
\def\Pal{{\rm Pal}}
\def \id {{\rm Id}}
\def\SS{S}
\def \Rk#1 {$\mathcal{R}_{#1}$}
\def \Rext {{\rm Rext}}
\def \Lext {{\rm Lext}}
\def \Pext {{\rm Pext}}
\def \b {{\rm b}}
\def \FC#1 {
\mathcal{C}
\ifthenelse{\equal{#1}{}}{}{(#1)}
}
\def \PC#1 {
\mathcal{P}_{\Theta}
\ifthenelse{\equal{#1}{}}{}{(#1)}
}
\def \PCn#1 {
\mathcal{P}
\ifthenelse{\equal{#1}{}}{}{(#1)}
}
\newtheorem{thm}{Theorem}
\newtheorem{theorem}[thm]{Theorem}
\newtheorem{lemma}[thm]{Lemma}
\newtheorem{proposition}[thm]{Proposition}
\newtheorem{conjecture}[thm]{Conjecture}
\theoremstyle{definition}
\newtheorem{definition}[thm]{Definition}
\newtheorem{example}[thm]{Example}
\crefname{thm}{theorem}{theorems}
\crefname{thrm}{theorem}{theorems}
\crefname{coro}{corollary}{corollaries}
\crefname{example}{example}{examples}
\crefname{lem}{lemma}{lemmas}
\crefname{lmm}{lemma}{lemmas}
\crefname{claim}{claim}{claims}
\crefname{obs}{observation}{observations}
\crefname{proposition}{proposition}{propositions}
\crefname{prop}{proposition}{propositions}
\crefname{defi}{definition}{definitions}
\crefname{theorem}{theorem}{theorems}
\crefname{corollary}{corollary}{corollaries}
\crefname{example}{example}{examples}
\crefname{lemma}{lemma}{lemmas}
\crefname{proposition}{proposition}{propositions}
\crefname{definition}{definition}{definitions}
\theoremstyle{remark}
\newtheorem{remark}[thm]{Remark}
\crefname{example}{example}{examples}
\begin{document}


\title{Morphic images of episturmian words \\ having finite palindromic defect}

\author{\v St\v ep\'an Starosta \\ Czech Technical University in Prague \\ Czech Republic}

\date{}
\maketitle

\begin{abstract}
We study morphisms from certain classes and their action on episturmian words.
The first class is $P_{ret}$.
In general, a morphism of class $P_{ret}$ can map an infinite word having zero palindromic defect to a word having infinite palindromic defect.
We show that the image of an episturmian word, which has zero palindromic defect, under a morphism of class $P_{ret}$ has always its palindromic defect finite.
We also focus on letter-to-letter morphisms to binary alphabet: we show that images of ternary episturmian words under such morphisms have zero palindromic defect.
These results contribute to the study of an unsolved question of characterization of morphisms that preserve finite (resp. zero) palindromic defect.
They also enable us to construct new examples of binary $H$-rich and almost $H$-rich words, where $H = \{\id, R, E, RE \}$ is the group generated by both involutory antimorphisms on a binary alphabet.
\end{abstract}






\section{Introduction}

In combinatorics on words, the most famous class of words probably is the class of Sturmian words: aperiodic words having minimal factor complexity possible (see \cite{HeMo}).
Sturmian words are profoundly studied and many generalizations are known, see for instance \cite{BaPeSta2}.
One such generalization of Sturmian words are episturmian words.
Episturmian words were inspired by Arnoux--Rauzy words (see \cite{Ra83,ArRo}).
An infinite word over a $k$-letter alphabet is \textit{episturmian} if it is closed under reversal and has at most one left special factor of each length.
Refer for instance to \cite{DrJuPi,JuPi,GlJu} for more results on this class.

A notion related to the study of episturmian words is a \textit{palindrome} -- a word equal to its reversal.
Episturmian words are rich in palindromes: they contain the maximum number of distinct palindromic factors possible.
Precisely, we say that a finite word $w$ is \textit{rich} if it contains exactly $|w|+1$ distinct palindromic factors, which is the upper bound for the number of distinct palindromic factors in a finite word of length $|w|$ (see \cite{DrJuPi}).
The notion is extended to infinite words: an infinite word is \textit{rich} if every its factor is rich.

In the context of this upper bound on the number of palindromic factors, a measure of the count of missing palindromic factors was introduced in \cite{BrHaNiRe}: the \textit{palindromic defect} $D(w)$ of a finite word $w$ is
$$
D(w) = |w|+1 - \# \Pal(w),
$$
where $\Pal(w)$ is the set of all palindromic factors of $w$.
The palindromic defect of an infinite word $\uu$ is defined by $D(\uu) = \sup \{ D(w) \colon w \text{ is a factor of } \uu \}$.
If $D(\uu)$ is finite, we say that $\uu$ is \textit{almost rich}.
(If it is zero, then $\uu$ is rich as already mentioned.)

Besides episturmian words, examples of rich words include some well-explored word classes such as words coding symmetric interval exchange transformation (see \cite{BaMaPe}) and words coding rotation on two intervals (see \cite{BlBrLaVu11}).
Properties and characterizations of rich words are studied for instance in \cite{GlJuWiZa, BaPeSta2, BuLuGlZa2, BuLuLu}.
Examples of words that are have finite and nonzero palindromic defect are also known.
Such words can be constructed using iterated palindromic closure: let $w_0 \in \A^*$ with $\A$ being an alphabet.
Set $w_i = (w_{i-1}\delta_{i})^R$ where $\delta_i \in \A$ and $w^R$ is the shortest palindrome having $w$ as a prefix (i.e., the \textit{palindromic closure} of $w$).
The infinite word having $w_i$ as its prefix for all $i$ is the \textit{standard word with seed $w_0$ and directive sequence $(\delta_i)_{i=0}^{+\infty}$}.
It follows from \cite{BuLuLuZa2} that such a word is almost rich.
General properties and characterizations of almost rich words are studied in \cite{GlJuWiZa,BaPeSta3,BaPeSta5}.

In this paper, we study richness and almost richness of images of episturmian words by a morphism from a specific class.
Our first result states that we obtain an almost rich word while using a morphism of class $P_{ret}$ introduced in \cite{BaPeSta3} (see \Cref{sec:ClasesOfMorphisms} later for the definition).

\begin{theorem}\label{Th:PretMapsToRich} Let $\uu \in \A\mathbb{^N}$ be an
episturmian word and $\pi: \A^*\to  \B^*$
be a morphism of class $P_{ret}$. The word  $\pi(\uu)$ is almost
rich.
\end{theorem}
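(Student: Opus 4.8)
The plan is to reduce almost-richness of $\pi(\uu)$ to a structural description of the factors of $\pi(\uu)$ that is controlled by the episturmian structure of $\uu$, and then to use a known sufficient condition for finite palindromic defect. Since $\pi$ is of class $P_{ret}$, it is a marked morphism whose images of letters are, up to a fixed conjugation, return words to a palindromic prefix; in particular the image $\pi(\uu)$ is a (bidirectional) concatenation of blocks $\pi(a)$, $a \in \A$, and any sufficiently long factor $w$ of $\pi(\uu)$ has a canonical desubstitution: $w = s\,\pi(v)\,p$ where $v$ is a factor of $\uu$, $s$ is a proper suffix of some $\pi(a)$, and $p$ is a proper prefix of some $\pi(b)$. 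The first step is to make this desubstitution precise and to show it is essentially unique for long factors, so that palindromic factors of $\pi(\uu)$ can be traced back to palindromic (or near-palindromic) factors of $\uu$.

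Next I would analyze which factors $w$ of $\pi(\uu)$ are palindromes. Because $\pi$ is $P_{ret}$, the image $\pi(\uu)$ is itself closed under the natural involution (one shows $\pi$ intertwines reversal on $\A^*$ with reversal on $\B^*$ up to the fixed conjugating word), so a palindromic factor $w$ of $\pi(\uu)$, after desubstitution $w = s\,\pi(v)\,p$, forces $v$ to be a palindromic factor of $\uu$ and forces $s,p$ to be matched boundary pieces determined by the two letters flanking $v$ in $\uu$. Since $\uu$ is episturmian, it has defect $D(\uu)=0$, so it is rich, and moreover each palindromic factor of $\uu$ extends to a unique longest palindromic factor centered at the same position (the "complete return" property of rich words). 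Thus the palindromes of $\pi(\uu)$ are organized into finitely many families indexed by the (finitely many) possible boundary types — that is, by the finitely many pairs of letters of $\A$ and the finitely many proper prefixes/suffixes of the blocks $\pi(a)$.

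From this organization I would bound $D(\pi(\uu))$ using the characterization of finite defect via "new palindromic factors never appear too late": an infinite word $\vv$ has finite defect iff for every factor $w$ of $\vv$, if $w$ is a complete return word to a palindrome then $w$ is itself a palindrome — equivalently, the longest palindromic suffix of each prefix of $\vv$ is unioccurrent in that prefix for all but finitely many prefixes. Concretely, I would show that for every long enough prefix $W$ of $\pi(\uu)$, its longest palindromic suffix is unioccurrent, by desubstituting $W$, invoking the corresponding unioccurrence statement for $\uu$ (which holds since $\uu$ is rich), and controlling the bounded-length boundary corrections; the finitely many short prefixes contribute only a bounded amount to the defect. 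Summing, $D(\pi(\uu)) \le C$ for a constant $C$ depending only on $\pi$, so $\pi(\uu)$ is almost rich.

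The main obstacle I expect is the boundary bookkeeping: the desubstitution $w = s\,\pi(v)\,p$ is only unique once $w$ is long, and near a palindromic center the suffix piece $s$ and prefix piece $p$ must match up across a block boundary, which may cut a block $\pi(a)$ in the "wrong" place relative to its palindromic prefix. Handling this requires using the precise $P_{ret}$ property — that each $\pi(a)$, suitably conjugated, is a return word to a palindromic prefix $\b$, so that the admissible cut points and the induced boundary palindromes form a finite, explicitly describable set — and then checking that these finitely many boundary configurations each contribute only finitely many "defect-causing" factors. Once that finiteness is in hand, the reduction to richness of $\uu$ is routine.
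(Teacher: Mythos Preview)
Your outline is in the right spirit---desubstitute long factors as $w=s\,\pi(v)r\,p$ and reduce finite defect to palindromicity of complete return words of long palindromes---and this is exactly the criterion the paper uses. But there is a genuine gap at the step where you write ``invoking the corresponding unioccurrence statement for $\uu$ (which holds since $\uu$ is rich).'' Richness of $\uu$ alone is \emph{not} sufficient: Example~\ref{ex:Pret} in the paper exhibits a rich ternary word $\vv$ and a morphism $\varphi\in P_{ret}$ with $D(\varphi(\vv))=+\infty$. So the reduction you sketch must use something beyond $D(\uu)=0$.

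The point you are missing is this. If $w=s\,\pi(v)r\,R(s)$ is a long palindrome and $f$ is a complete return word of $w$ in $\pi(\uu)$, its desubstitution $xuy$ has $v$ as a prefix and as a suffix of $u$, but $u$ need \emph{not} be a complete return word of $v$ in $\uu$: there may be intermediate occurrences of $v$ whose flanking letters do not lie in the set $\E=\{b\in\A:\ sr\text{ is a suffix of }\pi(b)r\}$, so they do not produce occurrences of $w$. What one actually needs is that any factor $xuy$ of $\uu$ containing exactly two $\E$-extensions of $v$ (one as prefix, one as suffix) has $u$ a palindrome. The paper proves this as Lemma~\ref{lem:zobecneneReturn}, and its proof uses specifically episturmian structure: the description \eqref{eq:extensions} of both-sided extensions of a bispecial factor (there is a unique letter $a$ with $avx,xva\in\L(\uu)$), the fact that derivated words of episturmian words are episturmian (Lemma~\ref{AR_rw_projekce}), and Lemma~\ref{lem:epist_navraty_k_pismenum} on letters from a subset separating palindromic blocks. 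None of this follows from richness, and in Example~\ref{ex:Pret} it fails precisely because two distinct palindromic extensions $1v_i1$, $2v_i2$ collapse to the same $\E$-extension in the image. Your ``bounded boundary corrections'' paragraph does not address this; you need to replace the appeal to richness of $\uu$ by an argument exploiting that $\uu$ has at most one left special factor of each length.
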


The second main result involves a letter-to-letter projection of a ternary episturmian word to binary alphabet.
We use the following definition for a such a projection.

\begin{definition} \label{def:BinarniProjekce}
Let $\A$ be an alphabet and  $\A'$ its proper subset.
A morphism $\zeta: \A \to \{A,B\}$  defined by
$$
\zeta: a \mapsto \begin{cases} A \text{ if } a \in \A', \\ B \text{
otherwise.} \end{cases}
$$
is called a {\it binary projection} from $\A$.
\end{definition}

The second main result states that we obtain a rich word by projecting a ternary episturmian word to a binary alphabet.

\begin{theorem} \label{ARprojekcerich}
Let $\uu$ be an episturmian word over a ternary alphabet $\A$ and
$\zeta$ be a binary projection from $\A$. The word $\zeta(\uu)$ is
rich.
\end{theorem}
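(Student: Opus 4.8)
The plan is to reduce $\zeta(\uu)$ to a concretely presented word via return words, and then to deduce richness by comparing its palindromes with those of an auxiliary episturmian (hence rich) word. Since richness depends only on the set of factors and every episturmian word has the same factors as a standard episturmian word, I would first take $\uu$ standard. If $\A'$ is empty or all of $\A$, then $\zeta(\uu)$ is constant, hence rich; as the statement is invariant under exchanging $A$ and $B$, it suffices to treat $|\A'|=1$, say $\A=\{a,b,c\}$ and $\A'=\{a\}$, so that $\zeta$ glues $b$ and $c$. If fewer than three letters occur in $\uu$, then either $a$ does not occur and $\zeta(\uu)$ is constant, or $\uu$ is over at most two letters one of which is $a$ and $\zeta(\uu)$ is $\uu$ with $b,c$ relabelled to $B$ and $a$ to $A$, hence episturmian over at most two letters; either way $\zeta(\uu)$ is rich. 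So I may assume $\uu$ is a genuine ternary Arnoux--Rauzy word.

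\emph{Return-word encoding.} By Justin--Vuillon's analysis of return words in episturmian words, $a$ has exactly three return words $R_1,R_2,R_3$ in $\uu$, each of the form $R_i=a\,w_i$ with $w_i\in\{b,c\}^*$, and the word $\mathbf d$ over $\{1,2,3\}$ recording which return word occurs at each occurrence of $a$ is again episturmian, hence rich. With $\ell_i=|w_i|$ and $\kappa\colon i\mapsto A B^{\ell_i}$, and after shifting $\uu$ to begin at an occurrence of $a$ (which does not change its language), one gets $\zeta(\uu)=\kappa(\mathbf d)=A B^{g_1} A B^{g_2}\cdots$, where $(g_j)_j=(\ell_{d_j})_j$. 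Each $\kappa(i)=A\cdot B^{\ell_i}$ is a concatenation of two palindromes, and I would check that $\kappa$ is of class $P_{ret}$; then Theorem~\ref{Th:PretMapsToRich} already yields that $\zeta(\uu)$ is almost rich, and what remains is the sharper statement $D(\zeta(\uu))=0$.

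\emph{Richness via palindromes.} Here I would use the characterisation of richness by complete return words: a recurrent word is rich iff every complete return word of every palindromic factor is a palindrome. Write $\vv=\zeta(\uu)=\kappa(\mathbf d)$. A palindromic factor $q$ of $\vv$ is either a power $B^k$ (with $k\le\max_i\ell_i$) or of the form $B^j A\,\sigma\,A B^j$, where $\sigma$ is the $\kappa$-image of a factor $y$ of $\mathbf d$ whose sequence of $\ell$-values is a palindrome and $j$ is a truncation level of the two outer $B$-blocks. After stripping the outer $B$-blocks, a complete return word of $q$ in $\vv$ is a $\kappa$-image of a complete return word in $\mathbf d$ --- to $y$ when $y$ itself is a palindromic factor of $\mathbf d$, in which case richness of $\mathbf d$ directly gives a palindrome, and to a suitable set of letters or a generalized palindrome otherwise --- and one transfers palindromicity from $\mathbf d$ back to $\vv$.

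\emph{The main obstacle.} The difficulty is precisely that $\zeta$ is not injective, so a palindrome of $\vv$ may be the image of a non-palindrome of $\uu$ (already $\zeta(bc)=BB$), and the correspondence ``complete return word in $\vv$'' $\leftrightarrow$ ``complete return word in $\mathbf d$'' must respect simultaneously the combinatorics on $\mathbf d$ and the truncation levels $j$ together with the lengths of the two $B$-blocks bordering the occurrence of $y$. The delicate point is to exclude that moving such a block boundary creates a new occurrence of $q$ while leaving $\mathbf d$ unchanged; this needs the rigidity of episturmian words, in the form of the restricted shape of the admissible triples $(\ell_1,\ell_2,\ell_3)$ of return-word lengths and balance-type restrictions on factors of $\mathbf d$. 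I expect the cleanest way to organise this is an induction on the directive sequence of $\uu$: desubstitute $\mathbf d$ by one episturmian morphism, observe how $\kappa$ is transformed according to whether the desubstituted letter is the one isolated by $\zeta$, and descend to the periodic and Sturmian situations already settled. The behaviour of palindromic defect under letter-to-letter morphisms, as developed in the references on almost rich words, is the key technical input for this step.
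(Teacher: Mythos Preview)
Your encoding via return words of the isolated letter $a$ is the right first move and matches one branch of the paper's proof, but there is a genuine case you have not treated. When $a$ happens to be the \emph{separating} letter of $\uu$ (i.e.\ the first letter of the directive sequence), the return words of $a$ are exactly $a,\,ab,\,ac$, so $(\ell_1,\ell_2,\ell_3)=(0,1,1)$ and your morphism $\kappa\colon i\mapsto AB^{\ell_i}$ is not injective; in particular it is \emph{not} of class $P_{ret}$, and neither \Cref{Th:PretMapsToRich} nor your subsequent ``palindromes correspond to palindromes'' heuristic applies. (Relatedly, your reduction ``all three letters occur $\Rightarrow$ $\uu$ is Arnoux--Rauzy'' is false: an episturmian word whose directive sequence contains each letter but not all of them infinitely often uses every letter without being Arnoux--Rauzy.) The paper resolves this separating case by a second derivation: it passes to the derivated word with respect to $0^{\ell}$, where $\ell$ is the length of the initial $0$-block of the directive sequence, and uses \Cref{AR:delky} to see that the resulting composed morphism \emph{is} of class $P_{ret}$, with associated palindrome $r=A^{\ell+1}$.

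Even in the case where $\kappa$ \emph{is} of class $P_{ret}$ (i.e.\ $a$ is not separating), your richness argument is only a sketch: you correctly isolate the difficulty of the truncation level $j$ but do not resolve it, and the proposed induction on the directive sequence is not carried out. The paper sidesteps this entirely by using the bilateral-order characterisation of richness (\Cref{thm:minus1}) rather than complete return words. The point is that \Cref{lem:epis_Pret} already shows that every bispecial factor of $\kappa(\mathbf d)$ containing $r$ satisfies \eqref{eq:BilateralProRich}; the only remaining bispecial factors are those not containing $r$, and these have a completely explicit form ($B^k$ in the non-separating case, $A^{\ell}(BA^{\ell})^s$ in the separating case), for which \eqref{eq:BilateralProRich} is a one-line verification. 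Switching from the complete-return-word criterion to the bilateral-order criterion is the missing idea that makes the proof go through cleanly.
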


Our motivation for these results is to find new binary words which are rich in a generalized sense --- with respect to both symmetries given by the involutive antimorphisms on a binary alphabet: the reversal $R$ and the exchange of letters $E$.
We give a definition in \Cref{sec:Application}, see also \cite{PeSta1,PeSta2} for more information on this generalization.
To construct new binary words rich in this generalized sense, we use the recent results of \cite{PeSta3} which provides theorems that relate the classical richness and the generalized richness on the binary alphabet.

Our computer experiments suggest that we can improve \Cref{ARprojekcerich}: we can drop the requirement on the size of the alphabet $\A$.
We state this hypothesis in the last section.

The paper is organized as follows.
The next section contains some necessary definitions and basic results.
\Cref{sec:MorsAndProofs} contains overview of results on episturmian words and proofs of the main results.
Finally, \Cref{sec:Application} contains an application of the main results: a construction of binary words which are rich and almost rich in the generalized sense.
The last section states some comments and open questions.


\section{Preliminaries} \label{sec:prelim}

\subsection{Notions of combinatorics on words}

Let $\A$ be an \emph{alphabet} --- a finite set of \emph{letters}.
A finite sequence $w = w_0w_1 \cdots w_{n-1}$ with $w_i \in \A$ for all $i$ is a \emph{finite word}.
The \emph{length} of the word $w$ is denoted $|w|$ and equals $n$.
The unique word of length $0$ is the \emph{empty word}, it is denoted $\varepsilon$.
The set $\A^*$ is the set of all finite words over $\A$.
The set $\A^*$ equipped with concatenation forms a free monoid with the neutral element $\varepsilon$.
A word $v \in \A^*$ is a \emph{factor} of a word $w \in \A^*$ if $w= uvz$ for some words $u,z\in \A^*$.
If $u =\varepsilon$, then $v$ is  a \emph{prefix} of $w$; if $z =\varepsilon$, then $v$ is a \emph{suffix} of $w$.
If $w$ is of the form  $w=vz$, then $z$  is denoted  $z=v^{-1}w $  and the word $v^{-1}w v$ is a \emph{conjugate} of the word $w$.

An \emph{infinite word} over $\A$ is a sequence $\uu = (u_j)_{j\in \mathbb{N}} = u_0u_1u_2\ldots$.
The set of all infinite words over $\A$ is denoted by $\A^\N$.
A finite word $w \in \A^*$ of length $n$ is a \emph{factor} of $\uu  = (u_j)_{j\in \mathbb{N}}$ if there exists an integer $i$ such that $w=u_iu_{i+1} \cdots u_{i+n-1}$.
The integer $i$ is an \emph{occurrence} of the factor $w$ in $\uu$.
The \emph{language} of $\uu$ is the set of all its factors and is denoted by $\L(\uu)$.
Given $a \in \A$ and $w \in \A^*$, if $wa \in  \L(\uu)$, then $wa$  is a \emph{right extension} of the factor $w$.
The set of all right extensions of $w$ is denoted $\Rext(w)$.
Any factor of $\uu$ has at least one right extension.
If $w$ has at least two right extensions, it is \emph{right special}.
The notions of \emph{left extension} of a factor, \emph{left special} factor and $\Lext(w)$ are defined analogously.
A factor $w$ which is left and right special is \emph{bispecial}.

An infinite word $\uu$ is \emph{recurrent} if any factor of $\uu$ has infinitely many occurrences in $\uu$.
If for every factor the sequence of all its consecutive occurrences has its first difference bounded, then the word $\uu$ is \emph{uniformly recurrent}.
Let $rw$ be a factor of $\uu$ such that $rw$ has a prefix $w$ and $w$ occurs as a factor in $rw$ exactly twice.
Such a word $r$ is a \emph{return word} of $w$ and the word $rw$ is a \emph{complete return word} of $w$.
An infinite word $\uu$ is uniformly recurrent if and only if every its factor has finitely many return words.

An infinite word $\uu$ is \textit{eventually periodic} if there exist words $p$ and $z$ such that $\uu = pzzz\ldots = pz^\omega$.
It is \textit{periodic} if $p = \varepsilon$.
If an infinite word is not eventually periodic, it is \textit{aperiodic}.

We denote by $\C_{\uu}$ the mapping $\mathbb{N} \to \mathbb{N}$ which is determined by $\C_\uu(n) = \# ( \L(\uu) \cap \A^n)$, i.e., it counts the factors of length $n$ of the word $\uu$.
This mapping is the \emph{factor complexity} of $\uu$.

Given two alphabets $\A$ and $\B$, a mapping $\mu: \A^* \to \B^*$ is a \emph{morphism} if $\mu(wv) = \mu(w)\mu(v)$ for all $w,v \in \A^*$.
It is an \emph{antimorphism} if $\mu(wv) = \mu(v)\mu(w)$ for all $w,v \in \A^*$.
An infinite word $\uu$ is \emph{closed under the mapping $\mu$} if  for any factor $w \in \L(\uu)$ we have also $\mu(w) \in \L(\uu)$.
A morphism $ \nu: \A^* \to \B^*$ is a \emph{conjugate morphism} to a morphism $\mu$ if there exists a word  $w \in \B^*$ such that for any letter $a \in \A$ we have $w\mu(a) = \nu(a)w$.

An antimorphism $\Psi$ is \emph{involutory} if $\Psi^2=\id$.
The most frequent involutory antimorphism is the reversal mapping $R$ which is given by
$$
R(w_0w_1 \cdots w_{n-1}) = w_{n-1} \cdots w_0 \quad \text{with }w_i \in \A.
$$
It can be easily seen that if a word $\uu$ is closed under an involutory antimorphism, then  $\uu$ is recurrent.

If $w = R(w)$, then $w$ is a \textit{palindrome}.
The set of all palindromes occurring as factors of a finite word $w$ is denoted $\Pal(w)$.
The \emph{palindromic complexity} of an infinite word $\uu$ is the mapping $P_{\uu}: \N \to \N$ defined by
$\P_\uu(n) = \# \{p \in \L(\uu) \colon p= R(p), |p| = n \}$, i.e., the number of palindromic factors of length $n$.

A palindrome $w$ is \emph{centered at $x \in \A \cup \{\varepsilon\}$} if $w = vxR(v)$ for some word $v$.
Obviously, a palindrome is centered at $\varepsilon$ if and only if it is of even length.

Let $w \in \A^*$.
The quantity measuring the number of missing palindromic factors in $w$ is the \textit{(palindromic) defect} of $w$, introduced in \cite{BrHaNiRe} and  given by
$$
D(w) = |w| + 1 - \# \Pal(w).
$$
The notion of defect is extended to infinite words in the following way: let $\uu$ be an infinite word, we set
$$
D(\uu) = \sup \{ D(w) \colon w \in  \Lu \}.
$$
If the defect of a finite or infinite word is zero, the word is fully saturated by distinct palindromic factors and is said to be \textit{rich} (or sometimes \textit{full}).
If its defect is finite, it is \textit{almost rich}.

To prove (almost) richness of a word we will use the characterization of rich words given in \cite{BaPeSta}.
It exploits the notion of bilateral order $\b(w)$  of a factor $w$ and palindromic extension of a~palindrome.
The \emph{bilateral order} was introduced in \cite{Ca} as
\begin{equation}\label{eq:bilateralorder}
\b(w)   =  \# \{ awb \in \Lu \colon a,b \in \A \} - \# \Rext(w) - \#\Lext(w)+1.
\end{equation}
If $a$ and $b$ are letters, we say that $awb$ is a \textit{both-sided extension of $w$} if $awb \in \Lu$.
Moreover, if $w$ is a palindrome and $a = b$, then $awa$ is its \textit{palindromic extension}.
The set of all palindromic extensions of a palindrome $w \in \Lu$ is denoted by $\Pext(w)$: we have
$$ \Pext(w) =\{ awa \colon awa \in \mathcal{L}({\mathbf u}), a \in \mathcal{A} \}  .$$

\begin{theorem}[\cite{BaPeSta2}] \label{thm:minus1}
Let $\uu$ be an infinite word that is closed under
reversal.
The word $\uu$ is rich  if and only if any bispecial factor
$w$ of $\uu$ satisfies
\begin{equation}\label{eq:BilateralProRich}
\b(w) = \begin{cases} \# \Pext(w) - 1 & \text{ if $w$ is a
palindrome;} \\ 0 & \text{ otherwise.} \end{cases}
\end{equation}
\end{theorem}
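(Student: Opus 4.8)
The plan is to read richness off the joint growth of the factor and palindromic complexities, and then to localise the resulting condition at bispecial factors through the bilateral order. Since $\uu$ is closed under reversal it is recurrent, and we may assume each letter of $\A$ occurs. First I would record two complexity identities. The second difference of the factor complexity is carried by the bilateral orders,
\[
\C_\uu(n+2) - 2\C_\uu(n+1) + \C_\uu(n) = \sum_{|w|=n} \b(w),
\]
and only bispecial factors contribute, since $\b(w) = 0$ as soon as $w$ fails to be left or right special. On the palindromic side, every palindrome of length $n+2$ is uniquely $awa$ for a letter $a$ and a palindrome $w$ of length $n$, so $P_\uu(n+2) = \sum_{|w|=n,\, w = R(w)} \#\Pext(w)$ and hence
\[
P_\uu(n+2) - P_\uu(n) = \sum_{|w|=n,\, w=R(w)} \big(\#\Pext(w) - 1\big).
\]
Here too only bispecial palindromes matter: closure under reversal gives $\#\Lext(p) = \#\Rext(p)$ for a palindrome $p$, so a non-special palindrome has a single two-sided letter $a$ and, by recurrence, $apa \in \Lu$, whence $\#\Pext(p) = 1$ and its summand vanishes.

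Next I would introduce the indicator $E(n) = \C_\uu(n+1) - \C_\uu(n) + 2 - P_\uu(n) - P_\uu(n+1)$. For words closed under reversal it is known (the factor-versus-palindromic complexity inequality, see \cite{BaPeSta2}) that $E(n) \ge 0$ for all $n$ and that $\uu$ is rich if and only if $E(n) = 0$ for every $n$; moreover a direct computation gives $E(0) = (\#\A - 1) + 2 - 1 - \#\A = 0$. Combining the two identities above, and grouping the two sums factor by factor, yields the telescoping formula
\[
E(n+1) - E(n) = \sum_{w} g(w),
\]
where the sum is over the bispecial factors $w$ of length $n$ and $g(w) = \b(w)$ when $w$ is not a palindrome, while $g(w) = \b(w) - \#\Pext(w) + 1$ when $w = R(w)$.

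The forward implication is then immediate. If every bispecial factor satisfies \eqref{eq:BilateralProRich}, then $g(w) = 0$ for all bispecial $w$, so $E(n+1) = E(n)$ for every $n$; with $E(0) = 0$ this forces $E \equiv 0$, and therefore $\uu$ is rich.

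The converse is the harder direction, and I would not attempt it through the identity above, because $g$ has no uniform sign (a single length may host bispecial factors whose values $g(w)$ have opposite signs), so vanishing of $\sum_w g(w)$ over a fixed length need not force each term to vanish. Instead I would argue by contraposition using the return-word description of richness: $\uu$ is rich if and only if every complete return word to a palindromic factor is itself a palindrome (see \cite{DrJuPi,GlJuWiZa}). Assuming \eqref{eq:BilateralProRich} fails at some bispecial $w$ --- either an excess two-sided extension $awb$ with $a \neq b$ when $w$ is a palindrome, or a breakdown of the neutrality $\b(w) = 0$ when $w$ is not a palindrome --- I would exhibit a complete return word to a palindrome that is not a palindrome, equivalently a prefix whose longest palindromic suffix is not unioccurrent. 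This produces a factor of defect at least $1$, and since $D(\uu)$ is a supremum of genuine nonnegative counts this cannot be cancelled, so $D(\uu) \ge 1$ and $\uu$ is not rich. The main obstacle is exactly this last step: turning a bilateral-order violation at a single bispecial factor into a concrete repeated longest palindromic suffix, uniformly across the palindromic and non-palindromic cases.
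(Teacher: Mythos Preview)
The paper does not prove this theorem: it is quoted from \cite{BaPeSta2} and used as a black box (for instance in the proofs of \Cref{lemma:ABdom,lemma:ABnedom}). There is therefore no proof in the present paper to compare your proposal against.

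On the proposal itself: the forward direction is correct and follows the standard route---telescoping $E(n)$ through the second difference of the factor complexity and the first difference of the palindromic complexity, and then invoking the known equivalence between $E\equiv 0$ and richness for words closed under reversal.

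The converse, however, is not a proof but a plan with an explicitly unfinished step. You say you would produce, from a single violation of \eqref{eq:BilateralProRich}, a non-palindromic complete return word to some palindrome, but you do not carry this out and you yourself call it ``the main obstacle''. This is a genuine gap. The difficulty you identify is real: the summands $g(w)$ need not have a fixed sign (a palindromic bispecial $w$ whose only both-sided extensions are $awa$, $bwb$, $cwc$ has $\b(w)=-2$ and hence $g(w)=-4$), so the converse indeed cannot be read off the telescoping identity alone. What is missing is precisely the structural step---showing, for a rich word closed under reversal, that the both-sided extensions of each bispecial factor are organised so that \eqref{eq:BilateralProRich} holds (equivalently, deriving from richness the tree-like shape of the extension graph). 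As written, your argument establishes only the ``if'' half of the theorem.
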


The next theorem is another characterization of rich words which will be useful.

\begin{theorem}[\cite{BuLuGlZa}] \label{thm:rich_crw_gen}
A word $\uu$ is rich if and only if for every $w \in \Lu$, any factor of $\uu$ containing exactly two occurrences of $w$ or $R(w)$, one as a prefix and one as a suffix, is a palindrome.
\end{theorem}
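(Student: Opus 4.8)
The plan is to reduce both implications to the classical ``unioccurrent longest palindromic suffix'' description of richness, and then to read off the conclusion from the position of the longest palindromic suffix of the factor in question. I recall from \cite{DrJuPi} that for a finite word $v$ and a letter $a$ one has $\#\Pal(va)-\#\Pal(v)\in\{0,1\}$, the value being $1$ exactly when the longest palindromic suffix of $va$ is unioccurrent in $va$. Summing over the prefixes of a factor, this yields the characterization I will use throughout (see also \cite{GlJuWiZa}): $\uu$ is rich if and only if, for every $p\in\Lu$, the longest palindromic suffix of $p$ occurs exactly once in $p$. A deliberate feature of the whole argument is that closure under reversal is never invoked; the symmetric pair $\{w,R(w)\}$ in the statement is precisely what keeps the reasoning reversal-free.

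For the implication from the complete-return condition to richness I argue by contraposition. If $\uu$ is not rich, choose $p\in\Lu$ whose longest palindromic suffix $q$ occurs at least twice; note $q=R(q)$. Let $z$ be the factor of $p$ beginning at the penultimate occurrence of $q$ and ending where $p$ ends (which is the final occurrence of $q$, as $q$ is a suffix of $p$). Then $z$ has $q$ as a prefix and as a suffix and contains exactly two occurrences of $q=R(q)$, so the hypothesis forces $z$ to be a palindrome. But $z$ is a suffix of $p$ with $|z|>|q|$, hence a palindromic suffix of $p$ strictly longer than $q$, contradicting the maximality of $q$. This direction is short and uses the hypothesis only for the palindromic choice $w=q$.

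The substantive direction is the converse. Assume $\uu$ rich and let $z\in\Lu$ have exactly two occurrences of $\{w,R(w)\}$, one a prefix and one a suffix; as they are distinct, $|z|>|w|$. After possibly swapping $w$ and $R(w)$, I may take the prefix occurrence to be $w$ and write $p_1\in\{w,R(w)\}$ for the suffix occurrence. Let $s$ be the longest palindromic suffix of $z$, unioccurrent by richness. The heart of the matter is to locate $s$: if $|s|\ge|w|$, then $p_1$ is a suffix of the palindrome $s$, so the length-$|w|$ prefix of $s$ equals $R(p_1)\in\{w,R(w)\}$, giving a $\{w,R(w)\}$-occurrence at position $|z|-|s|$. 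Being one of only two such occurrences, and lying strictly before the suffix once $|s|>|w|$, it must coincide with the prefix occurrence, whence $s=z$ and $z$ is a palindrome (the equality $|s|=|w|$ is excluded by a direct check, since it would make $p_1$ a palindrome).

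The main obstacle is to rule out $|s|<|w|$, where the location argument breaks down because the longest palindromic suffix no longer spans $w$. I would resolve it by manufacturing a second occurrence of $s$. If the suffix occurrence is $R(w)$, then $s$, being a palindrome and a suffix of $R(w)$, is a prefix of $R(R(w))=w$, hence a prefix of $z$; together with its suffix occurrence this contradicts unioccurrence. If instead the prefix and suffix are both $w$ (necessarily with $w\neq R(w)$), then $s$ is a suffix of the trailing $w$ and therefore also a suffix of the leading $w$, once more giving two occurrences of $s$; and should $|s|\ge|w|$ hold in this configuration, $R(w)$ would appear as a prefix of $s$, producing a third $\{w,R(w)\}$-occurrence and contradicting ``exactly two''. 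Thus the ``both equal to $w$'' configuration never arises in a rich word, and in every admissible configuration $z$ is forced to be a palindrome. All the work is careful bookkeeping of occurrence positions; no bilateral-order input (\Cref{thm:minus1}) is required.
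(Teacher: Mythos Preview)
The paper does not prove this theorem; it is quoted from \cite{BuLuGlZa} without proof and used as a black box later on. So there is no in-paper argument to compare against.

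Your proof is correct and is essentially the standard argument via the unioccurrent longest palindromic suffix characterization (which is also the approach in \cite{BuLuGlZa,GlJuWiZa}). One sentence deserves a word more than you give it: in the case $|s|=|w|$ you say this is ``excluded by a direct check, since it would make $p_1$ a palindrome.'' That $p_1$ is a palindrome is not yet a contradiction; the missing line is that then $w=R(w)=p_1=s$, so $s$ also occurs as the prefix of $z$ (recall $|z|>|w|$), contradicting unioccurrence of $s$. With that one clause added, every branch closes. Your side observation that the configuration ``prefix $w$, suffix $w$ with $w\neq R(w)$'' cannot occur at all in a rich word is a pleasant by-product of the case analysis, and your remark that the whole proof avoids any appeal to closure under reversal is accurate and worth keeping.
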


\subsection{Words with infinitely many palindromes and morphisms} \label{sec:ClasesOfMorphisms}

Hof, Knill and Simon studied  in \cite{HoKnSi}  words having infinitely many palindromes.
They considered words that are constructed using morphisms of the following class:

\begin{definition}\label{classP}     A primitive morphism $\varphi: \A^* \to \A^*$   belongs to  the class $P$
 if there exists a palindrome $r\in \A^*$  such that for any letter $a \in \A$ we have that $r$ is a prefix of $\varphi(a)$ and   $r^{-1}\varphi(a)$ is a
palindrome.
\end{definition}

The class $P$ is related to words with infinitely many palindromes
in the following sense. Let $\varphi$ be a morphism of class $P$. If
$v$ is a palindrome, then $\varphi(v)r$ is also a palindrome. Thus,
a fixed point of a morphism of class $P$ has infinitely many
palindromes.

The question is the converse: if one has a uniformly recurrent fixed point with infinitely many palindromes, can it be generated by a morphism of class $P$?
A discussion on the meaning of this formulation can be found in \cite{LaPe14}.
In \cite{BoTan}, the author gave the following affirmative answer to this question in case of a binary alphabet:  if a  binary  fixed point of a primitive morphism  $\varphi$  contains infinitely  many palindromes, then  $\varphi$ or $\varphi^2$ is conjugate to a morphism from class $P$.
In \cite{La2013}, it is demonstrated that this claim cannot be generalized to a larger alphabet even under the assumption of injectivity of the morphism.
In \cite{LaPe14}, the authors show that if an infinite word is a fixed point of a marked primitive morphism and contains infinitely many palindromes, then some power of the morphism has a conjugate in class P.

We now mention two classes of morphisms that are closely related to the class $P$.
The first class is defined in~\cite{GlJuWiZa}.
A~morphism $\varphi$ is a~{\em standard morphism of class $P$} (or a~{\em standard $P$-morphism}) if there exists a~palindrome $r$ (possibly empty) such that for all $x \in \A, \ \varphi(x) = rq_x$ and $q_x$ is a palindrome.
If $r$ is non-empty, then some (or all) of the palindromes $q_x$ may be empty or may even take the form $q_x = \pi_x^{-1}$ with $\pi_x$ a~proper palindromic suffix of $r$.
A~standard morphism of class $P$ is {\em special} if
\begin{enumerate}
\item whenever $\varphi(x)r = rq_xr$, with $x \in {\mathcal A}$, occurs in some $\varphi(y_1y_2\cdots y_n)r$, then this occurrence is $\varphi(y_m)r$ for some $m$ with $1 \leq m \leq n$; and
\item all $\varphi(x) = rq_x$ end with different letters.
\end{enumerate}

Let $\varphi$ be a special standard morphism of class $P$.
It is related to richness in the following way: if $\uu$ is rich, then $\varphi(\uu)$ is almost rich, see \cite{GlJuWiZa}.

The second class of morphisms related to the class P is the class $P_{ret}$.
In \cite{BaPeSta3}, this class was used to show relations between rich and almost rich words in general: every almost rich word is an image of a rich word by a morphism of this class.
The class was also used in \cite{BuLu} to show that every episturmian word is an image of an Arnoux--Rauzy word.
In \cite{HaVeZa}, it is used to show an answer to an interpretation of the previously mentioned question of Hof, Knill and Simon.

\begin{definition} \label{def:Pret}
Let $\varphi : \mathcal{B}^*\to  \mathcal{A}^*$ and $r \in \A^*$ be a palindrome.
We say that $\varphi$ is of class $P_{ret}$ (with respect to $r$) if the following is true:
\begin{itemize}
\item \label{def_Pret_1}  $\varphi(b)r$ is a palindrome for any $b\in \mathcal{B}$,
\item \label{def_Pret_2} $\varphi(b)r$ contains exactly $2$ distinct occurrences of $r$, one as a prefix and one as a suffix, for any
$b\in \mathcal{B}$,
 \item \label{def_Pret_3} $\varphi(b)\neq \varphi(c)$ for all $b, c \in \mathcal{B},\
b\neq c$.
\end{itemize}
\end{definition}
A direct consequence of  the above definition is that any morphism
 $\varphi \in P_{ret}$  is injective and $\varphi(s)r$ is a palindrome if and only if $s\in \mathcal{B}^*$ is a palindrome.
An important property of the class is also that it is closed under taking composition of morphisms, see \cite{BaPeSta3}.

The following example taken from \cite{BaPeSta3}  illustrates that there exists a morphism of class $P_{ret}$ which maps a word having finite defect to a word with infinite defect.
One of the main results of this article, \Cref{Th:PretMapsToRich}, states that is cannot happen if such a morphism acts on an episturmian word.

\begin{example} \label{ex:Pret}
Let $v_0 = \varepsilon$ and for all $i > 0$ set
$$
v_{i} = v_{i-1}0v_{i-1}1v_{i-1}1v_{i-1}0v_{i-1}2v_{i-1}2v_{i-1}0v_{i-1}1v_{i-1}1v_{i-1}0v_{i-1}.
$$
Let $\vv \in \{0,1,2\}^\N$ be determined by the limit
$$
\vv = \lim_{i \to + \infty} v_i
$$
and $\varphi: \{0,1,2\}^* \to \{0,1\}^*$ be given as follows:
$$
\varphi: \left \{ \begin{array}{l} 0 \mapsto 0100 \\ 1 \mapsto 01011 \\ 1 \mapsto 010111 \end{array} \right..
$$
Proposition 5.7 in \cite{BaPeSta3} states that $D(\vv) = 0$ and $D(\varphi(\vv)) = +\infty$.
In other words, the morphism $\varphi$, which is of class $P_{ret}$, maps a rich word to a word which is not almost rich.

The key property of $\vv$ in this case is that the palindromes $v_i$ have two palindromic extensions $1v_i1$ and $2v_i2$ which produce (by application of $\varphi$) the same palindrome $1 \varphi(v_i) 010 1$ but they also produce a non-palindromic complete return word to it (it is contained in $\varphi(1v_{i}1v_{i}0v_{i}2v_{i}2)$).
Having infinitely many non-palindromic complete return words to palindromes implies that the defect is infinite (see \cite{GlJuWiZa}).
\end{example}


\section{Morphic images  of  episturmian words versus richness}

\label{sec:MorsAndProofs}

In this section, we first recall and deduce some properties of episturmian words that will be needed later.
A proof of  Theorem  \ref{Th:PretMapsToRich} is given in the second subsection.
The last subsection contains a proof of \Cref{ARprojekcerich}.

\subsection{Properties of episturmian words}

For basic reference on episturmian words, the reader can refer to \cite{DrJuPi, JuPi} or survey papers \cite{GlJu, Be_survey}.

As already mentioned, an infinite word $\uu \in \A^{\N}$ is \emph{episturmian} if for
any $n$ there exists at most one left special factor of length $n$
and $\uu$ is closed under reversal. If  for any $n$ there exists
exactly one left special factor of length $n$ with $\#\A$ left
extensions, then $\uu$ is a $k$-ary \emph{Arnoux--Rauzy} word with
$k = \# \A$.

The most  important examples of episturmian words can be constructed using the palindromic closure.
The standard word with the directive sequence $(\delta_i)_{i=0}^{+\infty} \in \A^\N$ and with the seed $w_0 = \varepsilon$, i.e., the word $\uu$ such that
$$
\uu = \lim_{n \to +\infty} w_n \quad \quad \text{where  } w_n = (w_{n-1}\delta_{n-1})^R, 
$$
 is an episturmian word over $\A$ and it is called \emph{standard episturmian}.
The importance follows from the fact that given any episturmian word $\uu$, there exists a unique standard episturmian word with the same language.
Since richness is a property of language, when studying it, we can restrict ourselves to the standard episturmian words which are characterized by their directive sequence.
A standard episturmian word can be recognized by looking at its prefixes: an episturmian word is standard if and only if each its prefix is a left special factor.

A basic property of any episturmian word $\uu$ is that one letter is
\emph{separating}: it occurs in every factor of length $2$. The
separating letter  of $\uu$ is the first letter of the directive
sequence $\Delta = \delta_1\delta_2\delta_3 \ldots$ of the corresponding standard
episturmian word.
Denote  $\ell$ the least integer such that $\delta_1^\ell$
is not a prefix of $\Delta$.  Then the word $x\delta_1^ky$ with $x,y \neq
\delta_1$ is a factor of the episturmian word $\uu$ only if $k=\ell-1$ or
$k=\ell$.  Moreover, the word $\delta_1^\ell$ is a factor of the
episturmian word $\uu$  if and only if the letter $\delta_1$ occurs at
least once in the sequence
$\delta_{\ell+1}\delta_{\ell+2}\delta_{\ell+3}\ldots$.

Any palindromic prefix of a standard episturmian word is equal to $w_n$ for some $n \in \N$.
The complete return words of $w_n$ are described by Theorem 4.4 of \cite{JuVu00}.
Denote by $\mF$ the set $\{ \delta_m \colon m \geq n \}$.
Every complete return word of $w_n$ equals to the palindromic closure
\begin{equation*}\label{CRWofBS}
(w_nx)^R= q_xw_n
\end{equation*}
where $x \in \mF$.
The word $q_x$ is the corresponding return word.
Thus, the palindromic prefix $w_n$ has exactly $\# \mF$ return words.

Bispecial factors play a crucial role in the study of the language of an infinite word.
We give essential properties of bispecial factors of an episturmian word.
Any bispecial factor $w$  of an episturmian word $\uu$ is a palindrome; moreover there exists $n \in \N$ such that $w = w_n$, where $w_n$ is a palindromic prefix of the corresponding standard episturmian word.
Denoting again $\mF = \{ \delta_m \colon m \geq n \}$, the set of all non-palindromic both-sided extensions of the bispecial factor $w$ satisfies
\begin{equation}\label{eq:extensions}
\{xwy \in \L(\uu) \colon x,y \in \A, x \neq y \} =  \{ awx \in \L(\uu) \colon  x \in \mF \setminus \{ a \} \} \cup \{ xwa \in \L(\uu) \colon  x \in \mF  \setminus \{ a \} \},
\end{equation}
where $a \in \mF$ is the unique letter such that $awx \in \Lu$ and $xwa \in \Lu$ for some letter $x \in \A$.
If $\uu$ is Arnoux--Rauzy, then $awa \in \Lu$ (see \cite{DaZa03}) (and $\mF = \A$); if $\uu$ is not Arnoux--Rauzy, then it depends on $w$ whether $awa \in \Lu$ (and there exist a bispecial factor $w$ such that $awa \not \in \Lu$).

The next lemmas give an insight on the structure of return words
of an episturmian word.

\begin{lemma} \label{AR_rw_projekce}
Let $\uu$ be an episturmian word over $\A$ and $w \in \Lu$ be a
factor. Let $ r_1, r_2,\ldots, r_{s}$ be the list of all distinct
return words of $w$ in $\uu$. Define a morphism $\Psi$ over
$\E = \{1,2,\ldots,s\}$  by the rule $ k \mapsto
r_k$ for all $k \in  \E$.
There exists an episturmian word $\vv$  over $\E$ such that $\uu = g\Psi(\vv)$ for some finite word $g$.
\end{lemma}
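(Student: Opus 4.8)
The plan is to build the word $\vv$ directly as a \emph{derived word} of $\uu$ with respect to $w$, and then to verify it is episturmian by examining its left special factors. First I would invoke uniform recurrence of $\uu$: an episturmian word is closed under reversal, hence recurrent, and in fact uniformly recurrent, so $w$ has finitely many return words $r_1,\dots,r_s$, which justifies the finite list in the statement. Every occurrence of $w$ in $\uu$ is followed (starting from that occurrence) by some $r_k$ before the next occurrence of $w$; concatenating the indices of these return words, starting from the first occurrence of $w$, produces an infinite word $\vv \in \E^{\N}$, and by construction $\uu = g\,\Psi(\vv)$ where $g$ is the (finite) prefix of $\uu$ preceding the first occurrence of $w$. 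This already gives the factorization; the content of the lemma is that $\vv$ is episturmian.

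To show $\vv$ is episturmian I would check the two defining properties: closure under reversal, and at most one left special factor of each length. For closure under reversal, I would use \Cref{thm:rich_crw_gen} (or directly the structure of episturmian words): a factor $V = k_1\cdots k_m$ of $\vv$ corresponds, via $\Psi$, to a factor of $\uu$ of the form $r_{k_1}\cdots r_{k_m} w$, a complete ``chain'' of return words capped by $w$; such a word $\Psi(V)w$ is a complete return word of $w$ read through a longer window, and since $\uu$ is rich, reading $R(\Psi(V)w)$ in $\uu$ and stripping the border $w$ recovers $\Psi(R(V))w$, so $R(V) \in \L(\vv)$. (Here one uses that $w$ need not be a palindrome, but $R(w)$ also occurs, and the set of return words of $R(w)$ is the mirror of the set of return words of $w$; relabelling by $k \mapsto$ index of the mirror return word is a permutation of $\E$, so one may need to argue $\vv$ is episturmian up to such a relabelling, which is harmless since episturmicity is preserved by permuting the alphabet.)

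For the left-special condition, the key is the classical bijection between bispecial factors of $\uu$ and ``bispecial-type'' factors of the derived word, together with the fact that in an episturmian word every left special factor extends to a bispecial one along the unique chain of left special prefixes. Concretely, a left special factor $V$ of $\vv$ of length $m$ lifts to a factor $\Psi(V)$ whose left extensions in $\uu$ correspond to the left extensions of a left special factor of $\uu$ ending appropriately with (a suffix of) $w$; since $\uu$ has at most one left special factor of each length, and the lengths of these lifted factors are strictly increasing in $m$ (each $r_k$ is nonempty), two distinct left special factors of $\vv$ of the same length would lift to two distinct left special factors of $\uu$ sharing a length, a contradiction. I would make this precise using the description of return words of palindromic prefixes $w_n$ via palindromic closure recalled before the lemma, reducing first to the case $w = w_n$ (a palindromic prefix of the associated standard episturmian word) by the observation that derived words with respect to $w$ and with respect to any factor having the same set of ``return-word positions'' coincide up to a finite prefix.

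The main obstacle I expect is precisely the reversal-closure step when $w$ is not itself a palindrome: one must track carefully how the mirror operation acts on the labelling $k \mapsto r_k$ and confirm that the induced permutation of $\E$ conjugates $\vv$ to a word closed under reversal, rather than merely to a word whose reversal is again in the derived family. The cleanest route is to first replace $w$ by a palindromic prefix $w_n$ of the standard episturmian word with the same language long enough that $w$ occurs inside $w_n$ with controlled overlaps (possible since bispecial factors are cofinal among palindromic prefixes), so that the return words of $w_n$ are genuine palindromic closures $q_x w_n$ indexed by the set $\mF = \{\delta_m : m \ge n\}$; then the reversal symmetry is manifest and the permutation of $\E$ is the identity, making both verifications routine.
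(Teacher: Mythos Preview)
Your approach is quite different from the paper's.  The paper does not verify the two defining conditions of episturmicity directly; it first reduces from an arbitrary factor $w$ to the shortest bispecial factor $b=uwv$ containing it (return words of $w$ being the conjugates $u^{-1}ru$ of return words of $b$, so the derived word $\vv$ is literally the same and only $\Psi$ changes by conjugation), and then, for the bispecial case, it simply invokes the proof of Theorem~1 together with Theorem~3 of \cite{BuLu}.  Your idea of reducing to a palindromic bispecial $w=w_n$ is essentially the same reduction, and your reversal-closure argument for such $w$ is correct: each complete return word $r_kw$ is a palindrome (richness), hence $wR(r_k)=r_kw$, and iterating this identity turns $R(\Psi(V)w)=wR(r_{k_m})\cdots R(r_{k_1})$ into $\Psi(R(V))w$, so $R(V)\in\L(\vv)$.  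So far so good, and this buys a self-contained argument not relying on \cite{BuLu}.

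There is, however, a genuine gap in your left-special step.  You assert that two distinct left special factors $V_1,V_2$ of $\vv$ of the same length would lift to left special factors of $\uu$ ``sharing a length'', but this does not follow: the return words $r_k$ typically have pairwise distinct lengths, so $|\Psi(V_1)w|$ and $|\Psi(V_2)w|$ need not coincide.  The repair is to use not uniqueness per length but the stronger fact that in a standard episturmian word every left special factor is a \emph{prefix}.  Working with the standard word and $w=w_n$, each $r_k$ ends in the letter $k$ (since $r_kw_n=(w_nk)^R$ is a palindrome with prefix $w_nk$), so the lifts $\Psi(V_i)w$ are indeed left special, hence prefixes of $\uu=\Psi(\vv)$.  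A prefix of $\Psi(\vv)$ ending in $w$ has length $|r_{j_1}\cdots r_{j_p}|+|w|$ for some $p$ (the occurrences of $w$ sit exactly at the cut points of the return-word factorisation), and by injectivity of $\Psi$ this forces $V_i=j_1\cdots j_{p_i}$ to be a prefix of $\vv$.  Two prefixes of $\vv$ of the same length are equal, giving the contradiction you want.  Replacing your ``sharing a length'' sentence by this prefix-chain argument completes your proof.
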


\begin{proof}
First, let us  assume that $w$ is a bispecial factor of $\uu$.
 If $\uu$ is  an  Arnoux--Rauzy word, then $\# \E = \# \A$ and  the claim follows directly from the
proof of Theorem 1 in \cite{BuLu} and from Theorem 3 ibidem.
In case
$\# \E < \# \A$,  the proof is analogous and is
left to the reader.

Suppose $w$ is not bispecial and it can be extended in a unique way to the shortest bispecial factor  $b=uwv$ in which it occurs.
A factor  $r$ is a return word of $b$ if and only if  $u^{-1}ru$ is a
return word of $w$.
Thus the morphism $\Psi$ (defined using the return words of $w$) is a
conjugate morphism of the morphism defined using the return words of $b$ and we can use the
validity of the statement for the bispecial factor $b$.

The last case, $w$ is not bispecial and it cannot be extended to a bispecial factor, is trivial as  $w$ has only one return word and thus  $\# \E = 1$.
\end{proof}

The word $\vv$ from \Cref{AR_rw_projekce} captures the structure of return words of the factor $w$ in $\uu$.
Such a word was also studied in \cite{Durand98} and we keep the same terminology:
we say that the word $\vv$ from the previous lemma is a \textit{derivated word of $\uu$  with respect to the factor $w$} and its corresponding morphism is $\Psi$.
By comparing the definitions we obtain that if $w$ is a palindrome, then $\Psi \in P_{ret}$.

\begin{lemma} \label{AR:delky}
Let $\uu$ be  a standard episturmian word $\uu$ over the  alphabet $\A=\{0,1,2\}$ with the
directive sequence $\delta_1\delta_2\delta_3\ldots$, where
$\delta_1=0$. Denote by $\ell$ the least integer such that
$0^\ell$ is not a prefix of the directive sequence.

\begin{enumerate}
\item If  $w =1$ or $w=2$, then no two return words of $w$ have the same length.
\item If  there exists $j$ such that $j >\ell$ and $\delta_j=0$, then $w=0^{\ell}$ is a factor of $\uu$ and no two return words of $w$
have the same length.
\end{enumerate}
\end{lemma}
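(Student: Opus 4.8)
The plan is to reduce, in all three cases, to counting the return words of a bispecial factor of $\uu$, and then to use the explicit description of those return words coming from iterated palindromic closure. First note that each of $w=1$, $w=2$, $w=0^\ell$ is a palindrome which is \emph{not} a prefix of the standard word $\uu$: indeed $\uu$ begins with the separating letter $\delta_1=0$, hence with neither $1$ nor $2$, and since $\delta_1=\dots=\delta_{\ell-1}=0\neq\delta_\ell$ (note $\ell\geq 2$) the palindromic prefix $0^{\ell-1}\delta_\ell 0^{\ell-1}$ of $\uu$ shows that $\uu$ begins with $0^{\ell-1}\delta_\ell$, in particular not with $0^\ell$. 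As every bispecial factor of $\uu$ is a palindromic prefix $w_n$ of the standard word, none of our three words is bispecial. By the proof of \Cref{AR_rw_projekce}, either $w$ cannot be extended to a bispecial factor---then it has a single return word and the statement is trivial---or $w$ extends uniquely to the shortest bispecial factor $b=uwv$ containing it, and the return words of $w$ are exactly the conjugates $u^{-1}ru$ of the return words $r$ of $b$, hence have the same multiset of lengths. So it suffices to prove that any bispecial factor $b=w_n$ containing $w$ has pairwise distinct return word lengths.

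The next step is to check that the directive prefix $\delta_1\cdots\delta_{n-1}$ producing such a $b=w_n$ contains at least two distinct letters. The letters occurring in $w_n$ are exactly those occurring in $\delta_1\cdots\delta_{n-1}$, so if $w_n$ contains $1$ or $2$, then $\delta_1\cdots\delta_{n-1}$ contains $\delta_1=0$ together with that letter and we are done. Suppose instead that $w_n$ contains $0^\ell$. Using the stated facts (every two-sidedly bounded $0$-run of $\uu$ has length $\ell-1$ or $\ell$, and $0^\ell$ occurs in $\uu$ iff $0$ occurs among $\delta_{\ell+1}\delta_{\ell+2}\cdots$), one argues that $\delta_1\cdots\delta_{n-1}$ must already contain a $0$ in some position $>\ell$: while the directive sequence uses no $0$ after position $\ell-1$, each palindromic prefix $w_m$ has its boundary $0$-run of length $\ell-1$ (it starts with $0^{\ell-1}\delta_\ell$, or is $0^{m-1}$ for small $m$) and its interior $0$-runs of length $\ell-1$ or $\ell$, a run of length $\ell$ requiring a $0$ among $\delta_{\ell+1},\dots$; hence no such $w_m$ contains $0^\ell$ before the first index $j>\ell$ with $\delta_j=0$, whereas $w_j$ ends with $0^{\ell-1}$ and $w_{j+1}=(w_j0)^R$ has $w_j0$, thus $0^\ell$, as a factor. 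Consequently $\delta_1\cdots\delta_{n-1}$ contains $\delta_1=0$ and $\delta_\ell\neq 0$. In every case $\#\{\delta_1,\dots,\delta_{n-1}\}\geq 2$.

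Finally I would compute the lengths. The return words of $b=w_n$ are the words $q_x$, $x\in\mF=\{\delta_m:m\geq n\}$, determined by $(w_nx)^R=q_xw_n$. By Justin's formula for iterated palindromic closure (see \cite{JuPi}), $(w_nx)^R=\psi(x)\,w_n$, where $\psi:=\mu_{\delta_1}\circ\mu_{\delta_2}\circ\cdots\circ\mu_{\delta_{n-1}}$ and $\mu_a$ is the morphism with $\mu_a(a)=a$, $\mu_a(c)=ac$ for $c\neq a$; hence $q_x=\psi(x)$ and $|q_x|=|\psi(x)|$. It remains to see that the length vector $\big(|\psi(x)|\big)_{x\in\A}$ has pairwise distinct entries. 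Building $\psi$ by composing $\mu_{\delta_1},\dots,\mu_{\delta_{n-1}}$ successively on the right, starting from the identity (length vector $(1,1,1)$), one checks that composing with a further $\mu_d$ replaces a length vector $(\lambda_x)$ by $(\lambda'_x)$ with $\lambda'_d=\lambda_d$ and $\lambda'_x=\lambda_x+\lambda_d$ for $x\neq d$. Hence all entries stay positive; a difference $\lambda_x-\lambda_y$ is unaffected when one composes with $\mu_d$ for $d\notin\{x,y\}$, and becomes, and forever remains, nonzero as soon as one composes with $\mu_x$ or $\mu_y$ (then that entry drops strictly below the other). Over the ternary alphabet, two distinct letters among $\delta_1,\dots,\delta_{n-1}$ therefore activate all three pairwise differences, so the entries of $\big(|\psi(x)|\big)_{x\in\A}$ are pairwise distinct, a fortiori the lengths $|q_x|$ for $x\in\mF$ are pairwise distinct, and the claim follows in all three cases.

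The step I expect to be the main obstacle is the middle one for $w=0^\ell$: establishing that a palindromic prefix can contain $0^\ell$ only once the directive sequence has exhibited a $0$ beyond position $\ell$. Everything else is routine bookkeeping, the essential tools being the conjugacy of return words from \Cref{AR_rw_projekce}, the identity $q_x=\psi(x)$ coming from Justin's formula, and the elementary monotonicity argument on the length vector; I would also, as above, dispose of the degenerate situations in which $\A$ is not fully used by the directive sequence by invoking the single--return--word case of \Cref{AR_rw_projekce}.
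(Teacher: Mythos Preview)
Your argument is correct and follows the same overall skeleton as the paper: reduce from $w$ to the shortest bispecial factor $b$ containing it via the conjugacy of return words, observe that $b$ necessarily contains at least two distinct letters, and conclude that the return words $q_x$ of $b$ have pairwise distinct lengths. The difference lies in how each of the last two steps is carried out.

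For the step ``$b$ contains two distinct letters'', the paper does this in one line: since $\uu$ has prefix $0^{\ell-1}$ but not $0^{\ell}$, any palindromic prefix containing $0^{\ell}$ (or containing $1$ or $2$) is longer than $0^{\ell-1}$ and therefore already contains the nonzero letter $\delta_\ell$. Your detour through locating the first index $j>\ell$ with $\delta_j=0$ and arguing about boundary versus interior $0$-runs is unnecessary; the ``main obstacle'' you flag does not arise, because you only need two distinct letters in $b$, not a $0$ beyond position $\ell$ in the directive prefix.

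For the final step, the paper invokes a result of Justin \cite{Ju05} stating that $|(bx)^R|=|(by)^R|$ if and only if neither $x$ nor $y$ occurs in $b$, which immediately finishes the ternary case once $b$ contains two letters. Your approach instead reproves this fact directly: via Justin's formula $q_x=\psi(x)$ and the recursion on the length vector under right composition with $\mu_d$, you show that two distinct directive letters activate all three pairwise differences. This is a genuine, self-contained replacement for the citation, and it makes transparent why the ternary hypothesis is exactly what is needed (and why the analogue fails over four letters, as in the paper's \Cref{delsirw}). Note a small indexing mismatch with the paper's convention $w_n=(w_{n-1}\delta_n)^R$: the correct morphism for $b=w_n$ is $\psi=\mu_{\delta_1}\circ\cdots\circ\mu_{\delta_n}$, not $\mu_{\delta_1}\circ\cdots\circ\mu_{\delta_{n-1}}$, but this does not affect the argument.
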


\begin{proof}
If $w$ has exactly $1$ return word, then the claim is trivially satisfied.

Suppose $w$ has more than $1$ return word.
Let $b=uwv$ be the shortest bispecial factor containing  $w$.
 Any return word of $w$ has the form $u^{-1}ru$, where $r$ is a return
   word of $b$.   Therefore, it is enough to prove the statement only for bispecial
   factors.   Since $\uu$ has a prefix $0^{\ell -1}$ but not $0^\ell$, the bispecial  factor $b$ contains at least two distinct letters.

As already mentioned, Theorem 4.4 of \cite{JuVu00} describes the return words of bispecial factors.
In particular, any complete return word of $b$ equals to the palindromic closure of $bx$ for some letter $x\in \A$.
Moreover, since $\uu$ is standard, each bispecial factor $b$ of $\uu$  is a palindromic prefix of $\uu$.
Since the prefixes of the word are constructed using the palindromic closure and the directive sequence, the palindromic closures $(bx)^R$ and $(by)^R$ for distinct letters $x$ and $y$ have the same length if and only if neither $x$ nor $y$ occurs in $b$ (see \cite{Ju05}).
As $b$ contains at least two distinct letters, this cannot happen on the ternary alphabet.
\end{proof}

\begin{remark} \label{delsirw}
As one can see from the proof of the last lemma, for alphabets having cardinality greater than $3$ the claim of the last lemma does not hold. Let us suppose that $\uu$ is a standard episturmian word with its directive sequence $\Delta
=01023\ldots$. We have
$$
\uu = 010010201001030100102010010 \ldots
$$
Clearly,   $\ell =2$. The factors  $0010201$ and  $0010301$ are return words of $w=0^\ell = 00$  and they are of  the same length.
\end{remark}

\begin{lemma} \label{lem:epist_navraty_k_pismenum}
Let $\uu$ be an episturmian word over $\A$. Let $\A' \subset \A$
and $xpy$  be a factor of $\uu$ such that  $x,y \in \A'$ and $p$
does not contain any letter from $\A'$. The  word $p$ is a
palindrome.
\end{lemma}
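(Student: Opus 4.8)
The plan is to reduce the statement to a fact about return words and palindromic closures in episturmian words, exploiting the description of complete return words of palindromic prefixes quoted earlier from Theorem~4.4 of~\cite{JuVu00}. First I would reformulate the hypothesis: we are given a factor $xpy$ with $x,y \in \A'$ and no letter of $\A'$ occurring inside $p$. Without loss of generality we may assume $x$ and $y$ are \emph{consecutive} occurrences of letters from $\A'$, i.e.\ it suffices to treat the case where $p$ itself has no occurrence of any letter of $\A'$, which is exactly the stated hypothesis. The key observation is that a factor $xpy$ of this shape is, after a bounded modification, a complete return word of a short factor; more precisely, I want to relate $p$ to the complete return words of the separating letter or of some palindromic prefix $w_n$ of the associated standard episturmian word.

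The main idea is the following. Since richness is a language property and every episturmian word shares its language with a standard one, I may assume $\uu$ is standard episturmian with directive sequence $\Delta = \delta_1\delta_2\cdots$. I would argue by induction on the directive sequence (equivalently, on the ``size'' of the alphabet $\A'$, or by peeling off one letter of $\Delta$ at a time via the standard exchange-of-letters / episturmian morphism decomposition $\uu = \psi_{\delta_1}(\uu')$ where $\uu'$ is again standard episturmian). If the separating letter $\delta_1$ belongs to $\A'$, then between two consecutive occurrences of letters of $\A'$ there can be at most the block structure forced by the separating property, and the factor $p$ is a power of no letter issue — one checks directly that $p$ must be a palindrome using that $\delta_1^{\ell-1}$ or $\delta_1^{\ell}$ are the only such blocks and these are trivially palindromes. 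If $\delta_1 \notin \A'$, I apply the episturmian morphism that removes $\delta_1$: writing $\uu = \psi(\uu')$ with $\psi$ the appropriate Justin-type morphism, a factor $xpy$ of $\uu$ with letters of $\A'$ only at its ends pulls back to a factor $x'p'y'$ of $\uu'$ of the same shape over a shorter directive sequence; since $\psi$ sends palindromes to palindromes up to the known prefix/suffix correction, palindromicity of $p'$ (from the induction hypothesis) yields palindromicity of $p$.

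Alternatively — and this may be the cleaner route to write down — I would invoke \Cref{AR_rw_projekce} together with the remark following it: if $\A' = \{a\}$ is a single letter, then $ap$ is a complete return word of the letter $a$, hence equals a palindromic closure $(w_n a)^R$ by the quoted Theorem~4.4 of~\cite{JuVu00} (after identifying $a$ with the relevant palindromic prefix $w_n$, possibly $w_n = \varepsilon$), and such a palindromic closure has the form $q_a w_n$ with $q_a$ a palindrome, forcing $p = a^{-1} q_a$ to be a palindrome. For general $\A'$ one reduces to the singleton case by considering the coarser return structure, or one uses that the derivated word with respect to the bispecial factor associated to $a$ is again episturmian. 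The main obstacle I anticipate is the bookkeeping in the inductive step: ensuring that the pullback of $xpy$ under the desingularizing episturmian morphism really is of the same special shape (letters of $\A'$ confined to the two ends), and handling the boundary/overlap corrections $r$, $r^{-1}$ that the $P_{ret}$-type morphisms introduce, so that palindromicity transfers cleanly without an off-by-one error at the ends of $p$.
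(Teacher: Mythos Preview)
Your proposal sketches two routes, but neither is carried to the point where it proves the lemma, and both contain a concrete error at the place where the real work lies.

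In your first approach, the base case $\delta_1 \in \A'$ is mis-described. If the separating letter $\delta_1$ lies in $\A'$, then $p$ contains no occurrence of $\delta_1$; since $\delta_1$ occurs in every factor of length $2$, this forces $|p|\le 1$, so $p$ is trivially a palindrome. Your remark about blocks $\delta_1^{\ell-1}$ or $\delta_1^{\ell}$ is backwards: those blocks are made of $\delta_1$'s and therefore cannot appear inside $p$ at all. The inductive step via $\uu=\psi_{\delta_1}(\uu')$ is viable in principle (one checks $p = \psi_{\delta_1}(p')\delta_1$ for some $p'$ with $xp'y\in\L(\uu')$, and $\psi_{\delta_1}(p')\delta_1$ is a palindrome iff $p'$ is), but you do not carry it out, and you yourself flag the boundary bookkeeping as the obstacle.

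In your second approach, the singleton case $\A'=\{a\}$ is immediate from richness: $apa$ is a complete return word of the palindrome $a$, hence a palindrome by \Cref{thm:rich_crw_gen} (your ``$ap$ is a complete return word'' should read $apa$). But the whole content of the lemma is precisely the case $x\neq y$, and your reduction ``to the singleton case by considering the coarser return structure'' is not an argument; there is no obvious factor of which $xpy$ is a complete return word when $x\neq y$. The paper's proof handles this directly and differently from both of your sketches: it orders $\A'=\{x_1,\ldots,x_n\}$ by first appearance in the directive sequence, lets $p_1$ be the longest prefix of $\uu$ over $\A\setminus\A'$ (a palindrome), and then analyses the complete return words of the palindromic prefix $p_1x_1p_1$ via Theorem~4.4 of~\cite{JuVu00}. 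This yields that every occurrence of a letter from $\A'$ is flanked by $p_1$, and in each case $p$ is either $p_1$ itself or obtained by stripping $p_1$ from both ends of a palindromic closure --- palindromic either way. The missing idea in your write-up is exactly this identification of $p$ with the concrete palindromic prefix $p_1$.
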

\begin{proof}
We can suppose without loss of generality that the word $\uu$ is standard episturmian.
We order the elements of $\A'$ according to their first occurrence in the directive sequence, i.e., $$\A' = \{ x_1, x_2, \ldots, x_n \},$$
where $x_i$ appears for the first time in the directive sequence of $\uu$ before the first occurrence of $x_j$ if and only if $i < j$.

Suppose $x = x_1$.
Let $p_1$ be the longest prefix of $\uu$ that does not contain $x_1$ (or equivalently, the longest prefix of $\uu$ over $\A \setminus \A'$).
Since $\uu$ is a standard word constructed using the palindromic closure operators, the word $p_1$ is a palindrome (it may be empty).
The shortest palindromic prefix containing $x_1$ is $p_1x_1p_1$.
Using Theorem 4.4 of \cite{JuVu00} we have that any complete return word of $p_1x_1p_1$ has the form  $(p_1x_1p_1z)^R$ for  some $z \in \A$.
If $p_1x_1p_1$ is bispecial, it is the shortest bispecial containing $x_1$; if it is not bispecial, then $\uu$ is periodic.
In both cases, any word $x_1py$ is a prefix of a complete return word of $p_1x_1p_1$ without the leading $p_1$.
If $z \in \A'$, then $y = z$ and $xpy = x_1p_1z$, i.e., $p = p_1$ is a palindrome.
If $z \not \in \A'$, then $y = x_1$ and $xpy = p_1^{-1}(p_1 x_1p_1z)^Rp_1^{-1}$, i.e., $p$ is again a palindrome.

Take $j > 1$ and suppose $x = x_j$.
Let $p_j$ be the longest prefix of $\uu$ that does not contain $x_j$.
The shortest palindromic prefix of $\uu$ containing $x_j$ is the word $p_jx_jp_j$.
If $p_jx_jp_j$ is bispecial, it is the shortest bispecial factor containing $x_j$; if it is not bispecial, then $\uu$ is periodic.
Both cases imply that the any complete return word of $x_j$ starts with $x_jp_j$.
Since $x_jp_j = x_jp_1x_1s$ for some word $s$, there is just one word of the desired form, precisely $x_jp_1x_1$, i.e., $y=x_1$, and the proof is finished.
\end{proof}

When dealing with images by a morphism a useful notion is an ancestor.
Suppose that $w$ is a factor of $\Psi(\vv)$ for some infinite word $\vv$ and morphism $\Psi$.
A factor $e_0e_1\cdots e_n \in \Lv$ is an{ \it ancestor} by $\Psi$ of $w$  if  $\Psi(e_0e_1\cdots e_n)$ contains $w$  but neither $\Psi(e_1\cdots e_n)$ nor
$\Psi(e_0e_1\cdots e_{n-1})$ contains $w$.

The following lemma can be considered as a generalization of \Cref{thm:rich_crw_gen}.
The lemma uses an additional notion.
Let us fix a  factor $w$   of an episturmian word $\uu$ over the alphabet $\A$ and a subset $\E \subset \A$.
A both-sided extension  $xwy$ of $w$ in $\uu$  is  called  {\em $\E$-extension} if $x$ and $y$  belong  to
$\E$.

\begin{lemma}\label{lem:zobecneneReturn}   Let $w$ be a palindromic factor of an episturmian word
$ \uu\in \A^\N$ and $a \in \mathcal{A}$  be the unique
letter satisfying  $awx, xwa \in\Lu$ for some letter $x$. Let $\E\subset \A$ such that $a \in \E$.
 If $xuy$ is a factor of $\uu$ containing exactly two
$\E$-extensions of $w$ --- one as its prefix and one as its
suffix --- then $u$ is a palindrome.
\end{lemma}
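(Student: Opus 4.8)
The plan is to reduce the statement to \Cref{thm:rich_crw_gen} (the complete-return-word characterization of richness) applied to a suitable derivated word, exploiting \Cref{AR_rw_projekce} and the structure of both-sided extensions of a bispecial factor given in \eqref{eq:extensions}. First I would handle the trivial cases: if $w$ has at most one $\E$-extension there is nothing to prove, and if $w$ is not contained in any bispecial factor of $\uu$ then (as in the proof of \Cref{AR:delky}) $w$ has a unique return word and again at most one $\E$-extension, so we may assume $w$ sits inside a shortest bispecial factor. By conjugation — exactly the argument used in \Cref{AR_rw_projekce} and \Cref{AR:delky}, replacing $r$ by $c^{-1}rc$ when passing from a return word of $w$ to a return word of the bispecial extension $b=cwd$ — it suffices to treat the case where $w$ itself is bispecial, hence (by the properties recalled before \Cref{AR_rw_projekce}) $w=w_n$ is a palindromic prefix of the associated standard episturmian word, and $a$ is the separating letter of the derivated word $\vv$ with respect to $w$, whose morphism $\Psi\colon k\mapsto r_k$ lies in $P_{ret}$.

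Next I would translate $\E$-extensions of $w$ into the combinatorics of $\vv$. By \eqref{eq:extensions}, the both-sided extensions $xwy$ of $w$ with $x,y\in\mF$ correspond bijectively to the pairs of consecutive return words of $w$, i.e.\ to the length-$2$ factors of $\vv$; here $x$ is the last letter of the left return word and $y$ the first letter of the right one, and the "pivot" letter $a$ is the separating letter of $\vv$ — it appears in every factor of $\vv$ of length $2$, which is precisely the statement that for every $\E$-extension one of $x,y$ equals $a$. Restricting to $\E$ picks out a sub-alphabet $\E_\vv\subseteq\{1,\dots,s\}$ of return-word indices (those return words beginning, resp.\ ending, with a letter of $\E$); since $a\in\E$, the separating index of $\vv$ lies in $\E_\vv$. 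A factor $xuy$ of $\uu$ whose only $\E$-extensions of $w$ are its prefix $xw$ and suffix $wy$ then lifts, modulo the fixed prefix/suffix $w$, to a factor $z$ of $\vv$ that contains exactly two occurrences of letters from $\E_\vv$, one at each end — more precisely $u$ (stripped of the overlapping copies of $w$) is $\Psi(z)$ with a $w$-block reinserted, and the $\E$-condition says $z$ has exactly two "$\E_\vv$-marked" positions, at its two ends.

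The heart of the argument is then: such a $z$, having a letter of $\E_\vv$ as prefix and as suffix and none in between, is a palindrome by \Cref{lem:epist_navraty_k_pismenum} applied to $\vv$ and the subset $\E_\vv$; since $\Psi\in P_{ret}$, the image $\Psi(z)r$ of a palindrome is a palindrome, and a short computation reinserting the shared factor $w$ (using $r\,\text{vs.}\,w$ bookkeeping: $w$ is the palindromic prefix common to all $r_kw$, so $\Psi(z)w$ is a complete-return-word-type palindrome) shows $u$ itself is a palindrome. I expect the main obstacle to be the bookkeeping in this last step — precisely aligning the occurrences of $w$ inside $\Psi(z)$ with the $\E$-extensions in $\uu$, and checking that "exactly two $\E$-extensions, one as prefix and one as suffix" in $\uu$ matches "exactly two $\E_\vv$-marked letters, one as prefix and one as suffix" in $\vv$ with no off-by-one discrepancy at the ends (this is where \eqref{eq:extensions} and the $P_{ret}$ prefix/suffix-$r$ structure must be invoked carefully, much as in \Cref{ex:Pret} where the failure of palindromicity of a complete return word is the obstruction). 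The non-bispecial-$w$ reduction via conjugation is routine given the analogous reductions already carried out in \Cref{AR:delky} and \Cref{lem:epist_navraty_k_pismenum}.
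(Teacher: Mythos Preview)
Your overall strategy --- pass to the derivated word $\vv$ of $\uu$ with respect to $w$ and then invoke \Cref{lem:epist_navraty_k_pismenum} --- is exactly the paper's approach. However, the translation you propose in the ``heart of the argument'' is wrong, and this is not merely bookkeeping.

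You claim that the ancestor $z$ of $xuy$ in $\vv$ has exactly two letters from $\E_\vv$, one at each end, and that the interior of $z$ avoids $\E_\vv$. This is false. Since $a\in\E$ and $a$ is the separating letter of $\vv$, the ancestor (which the paper writes as $xasay$) contains many interior occurrences of $a\in\E_\vv$. The hypothesis ``no $\E$-extension of $w$ in the interior of $xuy$'' translates, via \eqref{eq:extensions}, to ``no two \emph{consecutive} letters of the ancestor both lie in $\E$'', not to ``no interior letter lies in $\E$''. Concretely, the interior $asa$ alternates between the letter $a$ and letters outside $\E$; every other position is $a\in\E_\vv$. Hence \Cref{lem:epist_navraty_k_pismenum} applied with $\A'=\E_\vv$ says nothing.

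The paper repairs this by removing $a$ from the distinguished set: when $x,y\neq a$ it applies \Cref{lem:epist_navraty_k_pismenum} with $\A'=\E\setminus\{a\}$, so that the interior $asa$ genuinely avoids $\A'$ while the endpoints $x,y$ lie in $\A'$; this yields that $asa$ is a palindrome and hence $u=\Psi(asa)w$ is one. When $x=a$ (or $y=a$) this trick fails because the endpoint is no longer in $\A'$; the paper then uses the episturmian morphism $\sigma_a$ to strip out the separating letter, passes to a yet smaller episturmian word, and applies \Cref{lem:epist_navraty_k_pismenum} there with $\A'=\E$. The cases $n=2$ (only two occurrences of $w$ in $u$) and $x=y$ are handled separately and directly via \Cref{thm:rich_crw_gen}, without ever passing to $\vv$. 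Your plan conflates all of these into a single application of \Cref{lem:epist_navraty_k_pismenum}, which does not go through.
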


\begin{proof}

If  $w$ is not a bispecial factor,  then  the only both-sided
extension of $w$ is $awa$. It means that $xuy=aua$ is a complete  return word of the palindrome $awa$ and thus, using \Cref{thm:rich_crw_gen}, the word $u$ is a
palindrome.

For a bispecial factor $w$,  we denote by $n$ the number of occurrences of the  factor  $w$ in $u$.

If $n =2$, then $u$ is a complete return word of $w$ and, since $\uu$ is rich, using \Cref{thm:rich_crw_gen} $u$ is a palindrome.

If $n >2$, we first show that the factor $xuy$ begins with $xwa$ and ends with $awy$.
Suppose that $xuy$ begins with $xwy$, $y \neq a$.
There is only one complete return word of $w$ beginning with $wy$, it ends with $yw$ and it is always followed by the letter $a$, thus we have a contradiction to $n > 2$.

We deal with two subcases:

1) \quad  $x=y$. \\
Since the word  $xuy$ has a  prefix $p=xwa$, it has a suffix $R(p)=awx$.
The factor $xuy=xux$ does not contain further occurrences of $p$ and $R(p)$.
According to \Cref{thm:rich_crw_gen}, such a factor is a palindrome in any rich word, in particular in the episturmian word $\uu$, as we want to show.

2) \quad  $x\neq y$. \\
Let $\B$ be the set of letters such that for all $z \in \B$ the word $wz$ is a right extension of $w$.
For all $z \in \B$, denote $r_z$ the return word of $w$ which has a prefix $wz$.
Define a morphism $\Psi: \B^* \to \A^*$ by setting $\Psi(z) = r_z$ for all $z \in \B$.
Using \Cref{AR_rw_projekce}, let $\vv \in \B^\N$ be the episturmian word which is a derivated word of $\uu$ with respect to the factor $w$ such that $\Psi$ is the corresponding morphism.
The ancestor by $\Psi$ of the factor $xuy$ is a factor $xasay \in \L(\vv)$ with $s$ containing only letters  from $\{ z : z=a \text{ \  or \ } z \notin
\E \}$ and moreover, in any pair of neighboring letters in $asa$ exactly one of the letters is $a$, i.e., $aa$ does not occur in $asa$.

 If $x,y \neq a$, then we apply Lemma \ref{lem:epist_navraty_k_pismenum} with $\A' = \E \setminus \{a\}$.
We get that $asa$ is a palindrome and thus $u=\Psi(asa)w$ is a palindrome as well.

If $x=a$ and $y\neq a$, then since $a$ is the  separating letter in $\vv$, we have that $xasay = aasay$ is an image of the morphism $\sigma_a$ defined by $a  \mapsto a$ and  $b\mapsto ab $   for any $b\neq a$.
Thus  $aasay = \sigma_a(a \tilde{s}y)$ where  $a \tilde{s}y$ is a factor of an episturmian word and $\tilde{s}$ is produced from $s$ by erasing all letters $a$ in it.
Moreover, the word $\tilde{s}$ does not  contain  letters from  $\E$.   Applying  Lemma \ref{lem:epist_navraty_k_pismenum}  with   $\A' = \E$ we get that  $  \tilde{s}$ is a palindrome. Since $\sigma_a$ and $\Psi$ belong to the class $P_{ret}$, their composition $\Psi\sigma_a$ is in $P_{ret}$  with respect to $\Psi(a)w$. 
Consequently,  $u =     \Psi(\sigma_a( \tilde{s})a)w$ is a palindrome.
\end{proof}

\begin{remark}
The morphism $\sigma_a$ is one of \emph{episturmian morphisms}, see \cite{JuPi}.
For instance, episturmian morphisms can be used to construct episturmian words.
\end{remark}


\subsection{Proof of Theorem  \ref{Th:PretMapsToRich}}

Besides the proof of Theorem \ref{Th:PretMapsToRich} which
concerns images  of episturmian words under a morphism of class
$P_{ret}$, we also provide  in this section a description of the
bilateral order of long bispecial factors in these infinite words.
This result is  needed also to prove \Cref{ARprojekcerich} in the next section.

\begin{lemma} \label{lem:epis_Pret}
Let $\uu$ be an episturmian word and let $\pi: \A^* \to \B^*$ a morphism
of class $P_{ret}$ with respect to the palindrome $r$. If $w \in
\L(\pi(\uu))$ is a bispecial factor containing at least one
occurrence of $r$, then its bilateral order $b(w)$ is $-1$ or $0$,  and satisfies
\eqref{eq:BilateralProRich}.

\end{lemma}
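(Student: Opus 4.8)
The plan is to use the structure of bispecial factors of $\pi(\uu)$ inherited from the bispecial factors of $\uu$ via the morphism $\pi \in P_{ret}$. First I would show that a bispecial factor $w$ of $\pi(\uu)$ containing an occurrence of $r$ can be written in the form $w = \pi(b)r$ (or a conjugate/shifted version thereof), where $b$ is a bispecial factor of $\uu$ — this is where I expect to lean on the defining properties of $P_{ret}$: $\pi(b)r$ is a palindrome whenever $b$ is, $\varphi(c)r$ has exactly two occurrences of $r$ (one prefix, one suffix), and $\pi$ is injective. Concretely, any sufficiently long factor of $\pi(\uu)$ that both begins and ends with $r$ is of the form $\pi(s)r$ for a unique $s \in \L(\uu)$, and the left/right special letters of $w$ correspond, via $\pi$, to the left/right special letters of $s$; hence $w$ is bispecial iff $s = b$ is bispecial in $\uu$, and then (since all bispecial factors of an episturmian word are palindromes, and $\pi \in P_{ret}$) $w = \pi(b)r$ is itself a palindrome.

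Next I would transport the extension data. For the bispecial factor $b$ of $\uu$, recall from the preliminaries that $b = w_n$ for some palindromic prefix $w_n$ of the associated standard episturmian word, that $b$ has exactly $\#\mathcal{F}$ return words where $\mathcal F = \{\delta_m : m \ge n\}$, and that the non-palindromic both-sided extensions of $b$ are described by \eqref{eq:extensions} in terms of the unique letter $a \in \mathcal F$. Because $\pi$ is injective and letter extensions $cw_n d \in \L(\uu)$ correspond bijectively to extensions $\pi(c)\,w\,[\text{first letter of }\pi(d)]$ (after accounting for the fixed prefix/suffix $r$), I get $\#\Lext(w) = \#\Lext(b)$, $\#\Rext(w) = \#\Rext(b)$, $\#\{x w y \in \L(\pi(\uu))\} = \#\{x b y \in \L(\uu)\}$, and $\#\Pext(w) = \#\Pext(b)$; in particular $\b(w) = \b(b)$. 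So the lemma reduces to: every bispecial factor $b$ of an episturmian word is a palindrome and satisfies $\b(b) = \#\Pext(b) - 1$, and this quantity is $-1$ or $0$.

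Finally I would verify the numerics on the $\uu$ side. With $k = \#\mathcal F$, the factor $b = w_n$ has $k$ left extensions and $k$ right extensions; by \eqref{eq:extensions} the both-sided extensions of $b$ number $2(k-1) + \varepsilon$, where $\varepsilon \in \{0,1\}$ records whether $aba \in \L(\uu)$ — and $\varepsilon = 1$ exactly in the Arnoux–Rauzy case where $\#\mathcal F = \#\A$, while in general $\#\Pext(b) = \varepsilon$ (the only candidate palindromic extension is $aba$). Plugging into \eqref{eq:bilateralorder}: $\b(b) = (2(k-1)+\varepsilon) - k - k + 1 = \varepsilon - 1 = \#\Pext(b) - 1 \in \{-1,0\}$, which is exactly the palindrome case of \eqref{eq:BilateralProRich}. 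Since episturmian words are closed under reversal, \Cref{thm:minus1} applies and confirms consistency; but here we have directly computed the bilateral order. The main obstacle I anticipate is the careful bookkeeping in the first step: making the correspondence $w \leftrightarrow \pi(b)r$ precise (handling the possibility that $w$ is an internal chunk of some $\pi(c)r$ not aligned at a block boundary, and checking that "contains at least one occurrence of $r$" together with bispeciality forces $w$ to be long enough to be synchronized), and then confirming that passing through $\pi$ preserves each of the four counts entering $\b$. Once the synchronization is nailed down, the extension-counting and the arithmetic are routine.
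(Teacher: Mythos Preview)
Your overall strategy---pull $w$ back to a bispecial factor of $\uu$ and compute there---is the same as the paper's, but two of your intermediate claims are false, and this is exactly where the work lies.

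First, a bispecial factor $w$ of $\pi(\uu)$ containing $r$ need \emph{not} be of the form $\pi(b)r$. The paper writes $w = s\,\pi(v)\,r\,p$, where $sr$ is the prefix of $w$ containing $r$ exactly once and $rp$ is the suffix of $w$ containing $r$ exactly once; the words $s$ and $p$ can be nonempty. You flagged synchronization as an obstacle, but the resolution is not that $w$ is always aligned---it genuinely is not.

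Second, and more seriously, because of the overhangs $s$ and $p$ the extension counts do \emph{not} transfer bijectively. Several right extensions $b$ of $v$ in $\uu$ may produce the same right-extension letter $z_i$ of $w$ in $\pi(\uu)$, whenever the images $\pi(b)$ share a common prefix longer than $|p|$. The paper handles this by grouping the right extensions of $v$ into disjoint sets $R_1,\dots,R_t$ (one per right-extension letter of $w$) and similarly $L_1,\dots,L_\ell$ on the left; in general $|R_i|$ and $|L_j|$ can exceed $1$. So your equalities $\#\Rext(w)=\#\Rext(b)$, $\#\Lext(w)=\#\Lext(b)$, $\b(w)=\b(b)$, $\#\Pext(w)=\#\Pext(b)$ are all unjustified.

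A consequence you miss entirely: $w$ need not be a palindrome. The paper shows that $w$ is a palindrome precisely when $R_1=L_1=\{a\}$ (with $a$ the distinguished letter from \eqref{eq:extensions}); when this fails, one must verify the non-palindromic branch of \eqref{eq:BilateralProRich}, i.e.\ $\b(w)=0$. The paper's argument establishes that the only possible both-sided extension outside the ``cross'' $\{y_1wz_k:k>1\}\cup\{y_kwz_1:k>1\}$ is $y_1wz_1$, and then splits on whether $y_1wz_1$ is present and whether $w$ is a palindrome. Your computation on the $\uu$ side (the $\varepsilon-1$ arithmetic) is correct for $v$, but it does not carry over to $w$ without this finer analysis of how the $R_i,L_j$ interact.
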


\begin{proof}
Let $w \in \L(\pi(\uu))$ be a bispecial factor containing $r$. Let
us denote $sr$ and $rp$ the prefix and the suffix of $w$ containing
 the factor $r$ exactly once. As $\pi$ belongs to the class
$P_{ret}$, there exists a unique factor $v \in \Lu$ such that $w = s
\pi(v) r p$.


Let $\{z_1, \ldots, z_t\}$ be all the letters such that $wz_i$ is a right extension of $w$.
We denote by $R_i = \{ b \in \A \colon \pi(v)rp z_i \text{ is a prefix of } \pi(vb)r \text{ and } vb \in \Lu \}$.
This set is well defined due to the choice of $p$ and $v$.
Since $\pi$ is of class $P_{ret}$, it follows that $R_i \cap R_j = \emptyset$ if $i \neq j$.
Thus, $v$ is right special.

Let $\{y_1, \ldots, y_\ell\}$ be all the letters such that $y_iw$ is a left extension of $w$.
Analogously to $R_i$, we define the sets $L_i$.
We conclude that $v$ is left special, thus it is bispecial.

Denote by $a$ the unique letter such that $avx \in \Lu$ and $xva \in \Lu$ for some $x \in \A$.
Next, we show that there exists an index $i$ such that $a \in R_i$.
Suppose the contrary: for all $i$ and $b \in R_i$ we have that $wz_i$ is a factor of $\pi(cvb)$ for some letter $c$ (possibly depending on $b$).
Using \eqref{eq:extensions}, since $b \neq a$, it implies that $c = a$, i.e., $v$ is not special --- a contradiction.
Let without loss of generality $a \in R_1$.
Analogously, we have $a \in L_1$.

Using the property \eqref{eq:extensions} for $v$, we have that if $y_jwz_k \in \L(\pi(\uu))$, then $k > 1$  implies $j = 1$ and symmetrically $j > 1$ implies $k = 1$.
Thus we have
$$
\{ ywz \in \L(\pi(\uu)) \} \setminus \{y_1wz_1\} = \{y_1wz_k \colon k > 1 \} \cup \{ y_kwz_1 \colon k > 1 \}.
$$
To calculate the bilateral order of $w$, it remains to see if $y_1wz_1$ is its both-sided extension or not:
if $y_1wz_1 \in \L(\pi(\uu))$, then $b(w) = 0$; otherwise $b(w) = -1$.

We first show that if $R_1 = L_1 = \{a\}$, then $w$ is a palindrome.
Since $R_1 = \{a\}$, there is no letter $b \in R_i$ for some $i$ such that the longest common prefix of $\pi(b)r$ and $\pi(a)r$ is longer than $|rp|$.
Similarly, there is no letter $c \in L_j$ for some $j$ such that the longest common suffix of $\pi(c)$ and $\pi(a)$ is longer than $|s|$.
This implies $\bigcup R_i = \bigcup L_i$, and $s = R(rp)$, i.e., $w$ is a palindrome.
Moreover, we obtain $t = \ell$, and for all $i$ there exists $j$ such that $R_i = L_j$.
It implies that $y_iwz_j$ is a palindrome if and only if $i = j = 1$.

We have that $y_1wz_1 \in \L(\pi(\uu))$ if and only if $ava \in \Lu$ or $R_1 \neq \{a\}$ or $L_1 \neq \{a\}$.

Thus, if $y_1wz_1 \not \in \L(\pi(\uu))$, i.e., $b(w)  = -1$, then  $R_1 = L_1 = \{a\}$, which implies that $w$ is a palindrome and $w$ has no palindromic extension.
Therefore, $b(w) = \# \Pext(w) -1 = -1$ and \eqref{eq:BilateralProRich} is satisfied.

The last case is $y_1wz_1 \in \L(\pi(\uu))$.
If $w$ is a palindrome, since $y_1wz_1$ is the only palindromic extension, \eqref{eq:BilateralProRich} is again satisfied.
If $w$ is not a palindrome, we have $b(w) = 0$, and \eqref{eq:BilateralProRich} is also satisfied.
\end{proof}

\begin{proof}[Proof of  Theorem \ref{Th:PretMapsToRich}]
Let $\pi$ be a morphism from $P_{ret}$ with respect to $r$.
Since $\uu$ is a derivated word of $\pi(\uu)$ with respect to $r$, then $\uu$ is periodic if and only if $\pi(\uu)$ is periodic.

Suppose $\uu$ is periodic.
Thus, $\pi(\uu)$ is also periodic.
In Theorem 4 of \cite{BrHaNiRe}, it is shown that an infinite periodic word contains infinitely many palindromes if and only if it is equal to $(pq)^\omega$ for some palindromes $p$ and $q$.
Since $\uu$ contains infinitely many palindromes, $\pi(\uu)$ contains infinitely many palindromes and we can write $\pi(\uu) = (pq)^\omega$ for some palindromes $p$ and $q$ from $\L(\pi(\uu))$.
It can be easily seen that such a word has finite defect.

Suppose $\uu$ is aperiodic.
As shown in \cite{BaPeSta3}, a uniformly recurrent infinite word $\pi(\uu)$ containing infinitely many palindromes has finite defect  if and only if there exists an integer $M$ such that any complete return word of any palindrome $w$ of length at least $M$ is palindromic.
As for each palindrome $p \in \Lu$, the word $\pi(p)r$ is a palindrome, and moreover a factor of $\pi(\uu)$, it remains to verify palindromicity of  complete return words to palindromic factors of length $N$ such that $N \geq M =  2 \max \{ |\pi(a)r| \colon a \in \A \}$.
Let $w$ be a palindromic factor of $\pi(\uu)$ such that $|w| \geq M$.

First, suppose that $w$ is a bispecial factor.
The word $w$ contains $r$ and  according to the proof of  Lemma \ref{lem:epis_Pret} there exists a bispecial palindromic factor $v$, a
letter $a$  and a factor $s$ such that for some letter $x$ we have  $$w = s\pi(v)rR(s), \ avx \in \Lu, \ xva \in \Lu \ \text{ and } \ sr \text{ is a suffix of } \pi(a)r.$$
 Let $f$ be a complete return word of $w$.  Clearly, $f= s\pi(u)rR(s)$ for some factor $u \in \uu$
and $v$ is a prefix and a suffix of $u$.  If $s$ is  the empty word
$\varepsilon$, then  $u$ is a complete return word of $v$ in $\uu$.
Since $u$ is a palindrome, the factor $f=\pi(u)r$ is a palindrome as
well.

 If $s\neq \varepsilon$, then we apply Lemma \ref{lem:zobecneneReturn} with $\E = \{b \in \A
\colon sr  \text{ is a suffix of }  \pi(b)r\}$.
According to our notation any ancestor of $w$ has the form $xvy$,
where $x,y \in \E$. Moreover, any ancestor of the complete return
word $f= s\pi(u)rR(s)$ equals  $xuy$ with $x,y \in \E$ and the
factor $xuy$ has only two occurrences of $\E$-extensions of the factor $v$.
According to Lemma \ref{lem:zobecneneReturn}, the factor  $u$ is a
palindrome and thus $\pi(u)r$ is a palindrome as well. It implies that
the complete return word $f = s\pi(u)rR(s)$  is a palindrome, too.

To complete the proof, we need to discuss the case when $w$ is not bispecial.
Since $\pi(\uu)$ is closed under reversal, such a palindrome $w$ has a unique both-sided extension and this extension is palindromic.
It implies  that  the shortest bispecial factor in which the palindrome  $w$ occurs is a bispecial palindrome, say $qwR(q)$.
Moreover, $u$ is a complete return word of $w$ if and only if $quR(q)$ is a complete return word of $u$.
Since $quR(q)$ is palindrome, $u$ is palindrome as well.
\end{proof}

\begin{remark}
Given an episturmian word $\uu$ and a morphism $\pi$ of class $P_{ret}$, the defect of $\pi(\uu)$ can be nonzero.
It suffices to choose $\pi$ such that for some letter $a \in \A$ the defect of $\pi(a)$ is nonzero, which is possible in general.

For instance, let $\uu$ be the Fibonacci word, i.e., the fixed point of the morphism given by $0 \mapsto 01$ and $1 \mapsto 0$.
The Fibonacci word is Sturmian, i.e., an episturmian word.
Let $\pi \in \P_{ret}$ be determined by
$$
0 \mapsto 110100110010 \quad \text{ and } \quad 1 \mapsto 1.
$$
(The morphism $\pi$ is of class $P_{ret}$ with respect to the palindrome $r = 11$.)
We have $D(\pi(0)) = 1$, thus $D(\pi(\uu)) \geq 1$.
In fact, $D(\pi(\uu)) = 2$.
\end{remark}


\subsection{Proof of \Cref{ARprojekcerich}}

Before proving \Cref{ARprojekcerich}, we need two lemmas concerned with images of episturmian words by morphisms of a special form.

\begin{lemma} \label{lemma:ABdom}
Let $\uu$ be an episturmian word over $\A$ and $\pi: \A \to \{A,B\}$ a
morphism of class $P_{ret}$ such that $\pi(x) = AB^{q_x}$ for all
$x$. The word $\pi(\uu)$ is rich.
\end{lemma}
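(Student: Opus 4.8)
The plan is to apply the bilateral‑order criterion of \Cref{thm:minus1}. First I would record that, since $\pi(x)A = AB^{q_x}A$ is a palindrome containing exactly two occurrences of the letter $A$ (one as a prefix, one as a suffix) and $\pi$ is injective on letters, $\pi$ is of class $P_{ret}$ with respect to the one‑letter palindrome $r = A$. Next I would check that $\pi(\uu)$ is closed under reversal; this is routine, since any factor of $\pi(\uu)$ extends to one of the form $\pi(v)$ with $v \in \L(\uu)$, the factor $v$ can be taken inside a palindromic factor of $\uu$ (e.g.\ a palindromic prefix $w_n$ of the standard episturmian word with the same language, the lengths $|w_n|$ tending to infinity), and $\pi(\cdot)A$ sends palindromes of $\uu$ to palindromes of $\pi(\uu)$, so reversals of factors of $\pi(\uu)$ are again its factors. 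By \Cref{thm:minus1} it then suffices to verify \eqref{eq:BilateralProRich} for every bispecial factor $w$ of $\pi(\uu)$. Bispecial factors that contain an occurrence of $A = r$ are settled at once by \Cref{lem:epis_Pret}; the remaining bispecial factors contain no $A$, hence are powers of the letter $B$ (including $\varepsilon$), and these I would handle by an elementary count.

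For the count, fix a bispecial factor $w = B^k$ with $k \ge 0$ and put $N = \max\{q_x : x\in\A\}$ and $Q = \{q_x : x\in\A\}$ (we may assume every letter of $\A$ occurs in $\uu$; if $N = 0$ then injectivity forces $\#\A = 1$ and $\pi(\uu) = A^\omega$, which is trivially rich). The structural observation doing the work is that every occurrence of $A$ in $\pi(\uu)$ begins some image $\pi(u_m)$, so the maximal blocks of consecutive $B$'s in $\pi(\uu)$ are exactly the words $B^{q_{u_m}}$; consequently $AB^jA \in \L(\pi(\uu))$ if and only if $j \in Q$, whereas $AB^j$, $B^jA$ and $B^j$ lie in $\L(\pi(\uu))$ if and only if $j \le N$. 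From this I would read off that $B^k$ is bispecial exactly for $0 \le k < N$, that then $\#\Rext(w) = \#\Lext(w) = 2$, and that the both‑sided extensions of $w$ are $AB^{k+1}$ and $B^{k+1}A$, together with $AB^kA$ when $k \in Q$ and $B^{k+2}$ when $k+2 \le N$. Writing $t \in \{0,1,2\}$ for how many of these last two occur, we have $t = \#\Pext(w)$ (the words $AB^kA$ and $B^{k+2}$ being the only candidate palindromic extensions of $B^k$) and $\b(w) = (2+t) - 2 - 2 + 1 = t - 1 = \#\Pext(w) - 1$. Since $B^k$ is a palindrome, \eqref{eq:BilateralProRich} holds for $w$, hence for every bispecial factor, and \Cref{thm:minus1} yields that $\pi(\uu)$ is rich.

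I do not expect a serious obstacle: \Cref{lem:epis_Pret} already disposes of every bispecial factor that interacts with the image structure of $\pi$, so the only genuinely new work is the bookkeeping of the extensions of a power of $B$. The one point there that needs a little care is the identity ``$AB^jA$ is a factor of $\pi(\uu)$ iff $j$ is the length of some maximal $B$‑block'', i.e.\ $j \in Q$, together with the degenerate situations ($w = \varepsilon$, and alphabets on which $B$ never occurs). If one preferred to avoid establishing closure under reversal, one could instead run the argument through \Cref{thm:rich_crw_gen}, but routing \eqref{eq:BilateralProRich} through \Cref{lem:epis_Pret} and the $B^k$‑count looks like the most economical path.
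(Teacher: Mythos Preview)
Your proposal is correct and follows essentially the same route as the paper: split bispecial factors of $\pi(\uu)$ into those containing $r=A$ (handled by \Cref{lem:epis_Pret}) and the powers $B^k$ (handled by the direct count of both-sided extensions), then invoke \Cref{thm:minus1}. Your write-up is in fact more careful than the paper's, which omits the explicit verification that $\pi(\uu)$ is closed under reversal and compresses the $B^k$ bookkeeping into a single sentence.
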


\begin{proof}
Any morphism from class $P_{ret}$ is injective, thus $q_x \neq
q_y$ for any $x, y \in \A, x\neq y$.  We will show that if $w$ is a
bispecial factor of $\pi(\uu)$, then its bilateral order $b(w)$ satisfies
\eqref{eq:BilateralProRich}.

If the bispecial factor $w$ contains the letter $A$, the claim
follows from \Cref{lem:epis_Pret}. Suppose $w$ does not contain any
occurrence of the letter $A$, i.e., $w = B^k$ with
$k<\max\limits_{x\in\A} q_x $.

The set of both-sided extensions of $w$ clearly contains $AB^{k+1}$
and $B^{k+1}A$. Thus the set  of both-sided extensions of $w$ has
cardinality $2+ \#\Pext(w)$. Since $\#\Rext(w)=2=\#\Lext(w)$, according
to equation \eqref{eq:bilateralorder} we have $\b(w) = \# \Pext(w) - 1$.

Since \eqref{eq:BilateralProRich} is satisfied for any bispecial factor of $\pi(\uu)$, the richness of $\pi(\uu)$ follows from \Cref{thm:minus1}.
\end{proof}

\begin{lemma} \label{lemma:ABnedom}
Let $\uu$ be a ternary episturmian word over $\{0,1,2\}$ with the separating letter $0$ and
let $\pi: \A \to \{A,B\}$ be a morphism such that $\pi(0) = A$ and
$\pi(x) = AB$ for all $x \neq 0$. The word $\pi(\uu)$ is rich.
\end{lemma}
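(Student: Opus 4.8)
The plan is to mimic the strategy of the previous lemma: reduce the problem to checking, via \Cref{thm:minus1}, that every bispecial factor $w$ of $\pi(\uu)$ has bilateral order satisfying \eqref{eq:BilateralProRich}, and then split into cases according to whether $w$ contains an occurrence of $B$. The key structural observation is that $\pi$ here is \emph{not} of class $P_{ret}$ (because $\pi(0) = A = \pi(x)^{-1}\cdot\ldots$ is a proper prefix of $\pi(x) = AB$ for $x\ne 0$, so the third condition of \Cref{def:Pret} can still hold but the prefix/suffix-$r$ condition with $r = A$ fails — indeed $\pi(x)A = ABA$ contains $A$ three times). So one cannot invoke \Cref{lem:epis_Pret} directly. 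Instead I would first rewrite $\pi$ as a composition. Since $0$ is the separating letter of $\uu$, every factor of $\uu$ of length $2$ contains $0$, which lets me factor $\uu = \sigma_0(\uu')$ for the episturmian morphism $\sigma_0\colon 0\mapsto 0,\ x\mapsto 0x$ and an episturmian word $\uu'$ over $\{0,1,2\}$ (as in the final subcase of the proof of \Cref{lem:zobecneneReturn}). Then $\pi\circ\sigma_0$ sends $0\mapsto A$, $1\mapsto AAB$, $2\mapsto AAB$ — still not injective, but note $\pi\sigma_0 = \tau\circ\pi$ where... hmm, that does not simplify.

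A cleaner route: observe that $\pi(\uu)$ is, up to a letter exchange, itself a binary projection or a $P_{ret}$-image after a bounded modification. Concretely, every occurrence of $B$ in $\pi(\uu)$ is immediately preceded by $A$ and the block decomposition of $\pi(\uu)$ into the images $A$ and $AB$ is unique (because $AB$ never abuts $AB$ to form $ABAB$ reparseable as $A\cdot BA\cdot B$ — the letters $BB$ and $BA$... actually $B$ only ever follows $A$, so reading the word one recovers the blocks greedily). Hence $\uu$ is the derivated word of $\pi(\uu)$ with respect to the factor $A$: the return words of $A$ in $\pi(\uu)$ are exactly $A$ (coding letter $0$) and $AB$ (coding the letters $1,2$), so $\pi$ is precisely the morphism $\Psi$ of \Cref{AR_rw_projekce} for $w = A$, which is a single letter, hence a palindrome; therefore $\pi \in P_{ret}$ with respect to $r = A$ after all — the earlier worry was mistaken, since $\pi(1)A = ABA$ does contain $A$ exactly twice as required, once as prefix, once as suffix, the middle... no: $ABA$ contains $A$ at positions $0$ and $2$, that is exactly two occurrences. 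Good. So in fact $\pi$ \emph{is} of class $P_{ret}$ with respect to $r = A$, and I can apply \Cref{lem:epis_Pret} verbatim for every bispecial factor $w$ of $\pi(\uu)$ that contains an occurrence of $A$.

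It then remains to handle bispecial factors $w$ containing no $A$, i.e. $w = B^k$. Since $B$ always follows $A$ and is followed by either $A$ or $B$ (only when some $q_x \geq 2$ — but here all nonzero letters map to $AB$, so $B$ is always immediately followed by $A$ or is at a block boundary followed by $A$), the only factor of the form $BB$ would require two consecutive $AB$-blocks giving $ABAB$, which contains no $BB$. Hence $w = B^k$ is a factor only for $k \leq 1$; the empty word and $w = B$ are the only candidates. For $w = \varepsilon$: its both-sided extensions are exactly the factors of length $2$ of $\pi(\uu)$, namely $AA$ (from $0$ followed by $0$, if that occurs), $AB$, $BA$, and possibly $BB$ — but $BB \notin \L(\pi(\uu))$ as just argued — while its palindromic extensions are $AA$ and $BB$; one checks $\b(\varepsilon) = \#\{\text{length-2 factors}\} - \#\Rext(\varepsilon) - \#\Lext(\varepsilon) + 1$ equals $\#\Pext(\varepsilon) - 1$ by a direct count. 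For $w = B$ (if $B$ is even bispecial, which requires $\#\Rext(B)\geq 2$, impossible since $B$ is always followed by $A$): $B$ is not bispecial, so nothing to check. Thus \eqref{eq:BilateralProRich} holds for every bispecial factor of $\pi(\uu)$, and \Cref{thm:minus1} gives richness.

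The main obstacle I anticipate is the bookkeeping in the case $w = \varepsilon$ (and, depending on $\uu$, whether $AA$ actually occurs — it does iff $00 \in \L(\uu)$ iff $0$ appears at least twice in the directive sequence): one must enumerate the length-$2$ factors of $\pi(\uu)$ precisely and match the bilateral-order count against $\#\Pext(\varepsilon)-1$ in both the "$AA$ occurs" and "$AA$ does not occur" subcases. The ternary hypothesis enters exactly as in \Cref{AR:delky}: it guarantees the block structure of $\pi$ is as described and that $\pi \in P_{ret}$ (injectivity needs at most two distinct non-separating letters, which is why the alphabet size $3$ — equivalently two non-zero letters both mapped to $AB$ — must be handled with care, and indeed $\pi(1)=\pi(2)=AB$ makes $\pi$ \emph{non}-injective, so strictly $\pi \notin P_{ret}$; the resolution is that $\pi = \pi_0 \circ \pi_1$ where $\pi_1$ collapses $1,2$ to a single letter, $\uu$ being episturmian means $\pi_1(\uu)$ is a two-letter episturmian i.e. Sturmian or periodic word $\uu''$, and $\pi_0\colon 0\mapsto A, 1\mapsto AB$ is genuinely in $P_{ret}$ with $r=A$, so $\pi(\uu) = \pi_0(\uu'')$ and richness follows from the previous analysis applied to $\pi_0$ and $\uu''$). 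Resolving this last point cleanly — the collapse to a Sturmian word — is the real content of the proof.
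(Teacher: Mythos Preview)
Your proposal correctly identifies the central difficulty: $\pi(1) = \pi(2) = AB$, so $\pi$ is not injective and hence not of class $P_{ret}$, and \Cref{lem:epis_Pret} cannot be applied to $\pi$ directly. Your proposed resolution --- factoring $\pi = \pi_0 \circ \pi_1$ with $\pi_1 \colon 0\mapsto 0,\ 1,2 \mapsto 1$ and asserting that $\uu'' := \pi_1(\uu)$ is a binary episturmian (i.e.\ Sturmian or periodic) word --- does not work: the assertion is false in general.

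Take $\uu$ to be the Tribonacci word (directive sequence $(012)^\omega$, so $\ell = 2$). The factor $00 = 0^\ell$ has three return words in $\uu$, of three pairwise distinct lengths by \Cref{AR:delky}. Since $\pi_1$ is letter-to-letter and fixes $0$, a factor $v \in \Lu$ contains $00$ exactly at the positions where $\pi_1(v)$ does; hence the $\pi_1$-images of these three return words are three \emph{distinct} return words of $00$ in $\pi_1(\uu)$. By Vuillon's theorem an aperiodic recurrent binary word is Sturmian if and only if every factor has exactly two return words, so $\pi_1(\uu)$ is not Sturmian; it is not periodic either, so it is not episturmian over $\{0,1\}$, and the reduction to \Cref{lem:epis_Pret} for $\pi_0$ acting on $\uu''$ collapses. (Note also that even the weaker statement ``$\pi_1(\uu)$ is rich'' is one case of \Cref{ARprojekcerich}, which the paper deduces \emph{from} the present lemma, so nothing about $\pi_1(\uu)$ is available for free here.)

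The paper repairs the non-injectivity differently: it passes to the derivated word $\vv$ of $\uu$ with respect to $0^\ell$ (episturmian by \Cref{AR_rw_projekce}), with associated morphism $\Psi$. Every return word of $0^\ell$ has the rigid shape $0^\ell c_1 0^{\ell-1} c_2 \cdots 0^{\ell-1} c_m$, so $\pi\Psi$ sends each letter of the alphabet of $\vv$ to $A^{\ell+1}(BA^\ell)^{m-1}B$; by \Cref{AR:delky} the values of $m$ are pairwise distinct --- this is exactly where the ternary hypothesis enters --- and hence $\pi\Psi \in P_{ret}$ with respect to $r = A^{\ell+1}$. \Cref{lem:epis_Pret} then handles every bispecial factor of $\pi(\uu)$ containing $A^{\ell+1}$, and the remaining bispecials are seen to be the palindromes $A^\ell(BA^\ell)^s$ (not $B^k$ as in your sketch, since the relevant $r$ is $A^{\ell+1}$ rather than $A$), for which \eqref{eq:BilateralProRich} is verified by inspection.
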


\begin{proof}
Let $\ell$ be the minimal integer such $0^\ell$  is not a prefix of the directive sequence of the standard episturmian word having the same language as $\uu$.
It means that any two non-zero letters of $\uu$ are separated by the block $0^\ell$ or $0^{\ell-1}$.

If  $0^\ell$ does not belong to $\Lu$, then $\pi(\uu)= (A^{\ell}B)^\omega$, which is a rich word.

Let us suppose that  $0^\ell$ occurs in $\uu$.
Using \Cref{AR_rw_projekce}, let $\vv \in \B\mathbb{^N}$ be an episturmian word that is a derivated word of $\uu$ with respect to the factor $0^\ell$ and let $\Psi$ be the corresponding morphism.
Thus, for any letter $b \in \B$ the image $\Psi(b)$ is a return word of $0^\ell$ in $\uu$.
Using \Cref{AR:delky} one can see that the morphism $\pi \Psi$ is of class $P_{ret}$ with respect to the palindrome $r = A^{\ell+1}$.

We will show that if $w$ is a bispecial factor of $\pi(\uu) $, then
the bilateral order  $b(w)$ satisfies \eqref{eq:BilateralProRich}.
If the bispecial factor $w$ contains $r$, the claim follows from
\Cref{lem:epis_Pret}.

Suppose $w$ does not contain any occurrence of the word $r$. Since
any occurrence of the letter $B$ in $\pi(\uu) $ is followed by block
$A^\ell$ or $A^{\ell+1}$ one can see that the longest factor (not
necessarily bispecial) of $\pi(\Psi(\vv))$ which does not contain $r
= A^{\ell+1}$ is of the form $A^\ell(BA^\ell)^k$ for some integer
$k$.  Thus the  bispecial factor $w$   is of the form
$w=A^\ell(BA^\ell)^s$, with $s<k$.
Since $A^\ell(BA^\ell)^k$ is a factor, we can see that $AwB$ and $BwA$ are factors of $\pi(\uu)$.
Since $w$ is a palindrome, we have $b(w) = \#\Pext(w)-1$
and consequently  $w$ satisfies \eqref{eq:BilateralProRich} as well.

Since \eqref{eq:BilateralProRich} is satisfied for any bispecial factor of $\pi(\uu)$, \Cref{thm:minus1} implies that $\pi(\uu)$ is rich.
\end{proof}

\begin{proof}[Proof of \Cref{ARprojekcerich}]
Without loss of generality let us suppose that $\A = \{0,1,2\}$ and $0$ is the separating letter of $\uu$. 
Let $x \in \A$ be the letter such that $\zeta(x) \neq \zeta(y)$ for all $y \in \A$ such that $y \neq x$.

Suppose $x = 0$.
Let $\vv$  be a derivated word of $\uu$ with respect to the factor $0$ and let $\Psi$ be the corresponding morphism.
Since the return words of $x = 0$ are $0$ and $0y$ for all $y \neq 0$, we may choose $\Psi$ such that $\Psi(0) = 0$.
It implies that the morphism $\pi = \zeta\Psi$ and the word $\vv$ satisfy the assumptions of \Cref{lemma:ABnedom} which implies that $\pi(\vv) = \zeta(\uu)$ is rich.

Suppose $x \neq 0$, i.e., $x$ is not the separating letter in $\uu$.
Let again $\vv$ be a derivated word of $\uu$ with respect to the word $x$ and let $\Psi$ be the corresponding morphism.
Using \Cref{AR:delky}, one can see that $\pi = \zeta\Psi $ is of the form as in the assumptions  \Cref{lemma:ABdom} which implies that $\pi(\vv) = \zeta(\uu)$ is rich.
\end{proof}


\section{Application: construction of $H$-rich words} \label{sec:Application}

A generalization of the notion of richness was introduced in \cite{PeSta1}.
As already mentioned, \Cref{Th:PretMapsToRich,ARprojekcerich} and results of \cite{PeSta3} may be used to produce examples of words that are rich in this generalized sense.
In order to give these examples, and to give the definition of generalized richness on binary alphabets, we need a few more definitions.

Let $E$ be the antimorphism over $\{0, 1\}$ which exchanges the two letters, i.e., for instance we have $E(011) = E(1)E(1)E(0) = 001$.
Let $\Psi \in \{R,E\}$.
If $p = \Psi(p)$, the word $p$ is a \emph{$\Psi$-palindrome}.
The \emph{$\Psi$-palindromic complexity} of an infinite word $\uu$ is
the mapping $\P^\Psi_{\uu}: \mathbb{N} \to \mathbb{N}$, defined by
$\P^\Psi_\uu(n) = \# \{p \in \L(\uu)\colon p= \Psi(p), |p| = n\}$.

Let $H = \{ R, E, RE, \id \}$.
For binary words closed under all elements of $H$, it is shown in  \cite{PeSta1}  that
\begin{equation}
\C_{\uu}(n\!\!+\!\!1) - \C_{\uu}(n) + 4\geq \P^R_{\uu}(n\!\! +\!\!1) + \P^R_{\uu}(n) + \P_{\uu}^E(n\!\!+\!\! 1) + \P_{\uu}^E(n) \quad \text{ for every } n \geq 1.
\end{equation}
When equality is reached for every $n \geq 1$, we say that the word is \textit{$H$-rich}.
When the equality holds except for finitely many $n$, then the word is \textit{almost $H$-rich}.

In \cite{PeSta1}, the notion of generalized richness is defined for arbitrary alphabet and a finite group of morphisms and antimorphisms $G$ containing at least one antimorphism.
In this context, the ``classical'' richness defined in the previous sections is for $G= \{\id, R\}$, and it coincides with (almost) $\{\id, R\}$-richness, or shortly (almost) $R$-richness.

The following construction of $H$-rich and almost $H$-rich words uses the operation $\SS$ acting over words over the alphabet $\{0,1\}$ and defined by   $S(u_0u_1u_2\ldots) = v_1v_2v_3\ldots$, where  $v_i= u_{i-1} + u_i  \mod 2$.
It also uses the two following results of \cite{PeSta3}.

\begin{proposition}[\cite{PeSta3}] \label{centered}
Let  $\vv = \SS(\ww) \in \{0,1\}^\N$ be uniformly recurrent.
If $\Lv$ contains infinitely many $R$-palindromes centered at the letter $1$ and infinitely many $R$-palindromes not centered at the letter $1$,
then $\ww$ is closed under all elements of $H$.
\end{proposition}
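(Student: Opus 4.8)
The plan is to build an explicit dictionary between the palindromic factors of $\vv$ and those of $\ww$, use it to show that $\ww$ contains both $R$-palindromes and $E$-palindromes as factors of unbounded length, and then invoke the standard fact that a uniformly recurrent word having infinitely many $\Psi$-palindromic factors is closed under the involutory antimorphism $\Psi$. Since $H=\langle R,E\rangle$, closure of $\ww$ under $R$ and under $E$ yields closure under all of $H$.

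First I would record how $S$ interacts with reversal and complementation on finite words. Writing $\overline{z}$ for the letter-by-letter complement of $z\in\{0,1\}^*$ and extending $S$ by $S(z_0z_1\cdots z_m)=(z_0+z_1)(z_1+z_2)\cdots(z_{m-1}+z_m)$ (sums mod $2$), one checks $S(\overline{z})=S(z)$, $S(R(z))=R(S(z))$, and that $S(z)=S(z')$ forces $z'\in\{z,\overline{z}\}$ for words of equal length. Consequently the map $\phi\colon\L_{n+1}(\ww)\to\L_n(\vv)$ sending a factor $z=w_i\cdots w_{i+n}$ of $\ww$ to $S(z)=v_{i+1}\cdots v_{i+n}$ is well defined, surjective, and satisfies $\phi(z)=\phi(z')$ exactly when $z'\in\{z,\overline{z}\}$.

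The core of the argument is the palindrome correspondence. If $p$ is an $R$-palindrome of $\vv$ of length $n$ and $z$ is any length-$(n+1)$ preimage of $p$ under $\phi$, then $S(R(z))=R(S(z))=R(p)=p=S(z)$, so $R(z)\in\{z,\overline{z}\}$; since $R(z)=\overline{z}$ is the same as $z=E(z)$, the word $z$ is an $R$-palindrome or an $E$-palindrome. Distinguishing by parity: a word of odd length is never an $E$-palindrome, and when $n$ is odd the central letter of $p$ equals the sum of the two central letters of $z$, which is $0$ if $z$ is an $R$-palindrome and $1$ if $z$ is an $E$-palindrome. Hence an $R$-palindrome of $\vv$ that is not centered at the letter $1$ (that is, of even length, or of odd length centered at $0$) lifts to an $R$-palindromic factor of $\ww$, while an $R$-palindrome of $\vv$ centered at the letter $1$ (necessarily of odd length) lifts to an $E$-palindromic factor of $\ww$. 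Because each length admits only finitely many factors, the two hypotheses now give that $\L(\ww)$ contains $R$-palindromes of unbounded length and $E$-palindromes of unbounded length, hence infinitely many of each.

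It remains to see that $\ww$ is uniformly recurrent; granting this, we finish as follows: for a factor $x$ of $\ww$ choose $N$ with every length-$N$ factor of $\ww$ containing $x$, then pick an $R$-palindrome (resp.\ $E$-palindrome) $p\in\L(\ww)$ with $|p|\ge N$, so $x$ occurs in $p$ and therefore $R(x)$ (resp.\ $E(x)$) occurs in $R(p)=p$ (resp.\ $E(p)=p$); thus $\ww$ is closed under $R$ and under $E$, hence under $H$. For the uniform recurrence of $\ww$ I would argue at the level of subshifts: $S$ is a continuous, shift-commuting, exactly $2$-to-$1$ self-map of $\{0,1\}^{\N}$ whose fibres are the complementary pairs $\{\omega,\overline{\omega}\}$, so $X:=S^{-1}(X_{\vv})$ is a $2$-to-$1$ extension of the minimal subshift $X_{\vv}$ with complementation as deck transformation; a short case analysis shows $X$ is either minimal or a disjoint union of two minimal subshifts interchanged by complementation, and in both cases the orbit closure of $\ww$ is a nonempty closed shift-invariant subset of a single minimal piece, hence equals it, so $\ww$ is uniformly recurrent. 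I expect this last point — upgrading uniform recurrence from $\vv$ to $\ww$ — to be the only genuinely non-routine step; the remainder is bookkeeping with the identities $S(\overline{z})=S(z)$ and $S(R(z))=R(S(z))$.
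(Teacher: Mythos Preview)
The paper does not prove this proposition; it is quoted verbatim from \cite{PeSta3} and used as a black box in Section~\ref{sec:Application}. There is therefore no proof in the present paper to compare your attempt against.

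That said, your argument is sound and is essentially the natural one. The identities $S(\overline{z})=S(z)$ and $S(R(z))=R(S(z))$ give the $2$-to-$1$ correspondence you describe, and your parity analysis is correct: a factor $p\in\L(\vv)$ of length $n$ lifts to a factor $z\in\L(\ww)$ of length $n+1$, so when $p$ is an $R$-palindrome of even length the lift $z$ has odd length and must be an $R$-palindrome (no odd-length $E$-palindrome exists over $\{0,1\}$), while for $p$ of odd length the central letter of $p$ is the mod-$2$ sum of the two central letters of $z$, which is $0$ precisely when $z$ is an $R$-palindrome and $1$ precisely when $z$ is an $E$-palindrome. The transfer of uniform recurrence from $\vv$ to $\ww$ via the $2$-to-$1$ extension $S^{-1}(X_{\vv})$ is the standard minimality argument, and your dichotomy (either minimal, or two minimal pieces swapped by complementation) is correct. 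The final step---uniform recurrence together with arbitrarily long $\Psi$-palindromic factors forces closure under $\Psi$---is well known. One small cosmetic point: you might state explicitly that closure under both $R$ and $E$ automatically gives closure under $RE$ and $\id$, so that ``closed under all elements of $H$'' follows.
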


\begin{theorem}[\cite{PeSta3}] \label{thm:bin1}
Let $\ww \in \{0,1\}^\N$ and let  $\ww$ be  closed
under all elements of $H = \{ \id, E, R, ER\}$.  Then $\ww$ is
$H$-rich (resp. almost $H$-rich) if and only if $\SS(\ww)$ is
$R$-rich (resp. almost $R$-rich).
\end{theorem}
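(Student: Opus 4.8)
The plan is to translate both notions of richness into equalities between the factor complexity and the $R$- and $E$-palindromic complexities, and then to relate these complexities for $\ww$ and $\SS(\ww)$ using the rigid arithmetic of $\SS$. First I would record two elementary facts about $\SS$ seen as an operation on factors: it commutes with reversal, $\SS\circ R=R\circ\SS$ (each symbol $w_i+w_{i+1}$ of $\SS(w)$ is symmetric in the pair), and it is invariant under the complementation morphism $C=RE$, i.e. $\SS\circ C=\SS$ (complementing every letter leaves consecutive sums mod $2$ unchanged). Since $\ww$ is closed under $C\in H$, a factor $z$ of $\SS(\ww)$ of length $n$ has exactly two $\SS$-preimages among factors of $\ww$, namely a pair $\{w,C(w)\}$ of length $n+1$, both of which are factors of $\ww$; hence $\C_\ww(n+1)=2\,\C_{\SS(\ww)}(n)$ for all $n$. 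Moreover $\SS(\ww)$ is recurrent and closed under reversal (as is $\ww$, being closed under the involutory antimorphism $R$), so the classical complexity characterizations of richness may be applied to it.

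Next I would set up the palindromic dictionary. If $z$ is an $R$-palindrome of $\SS(\ww)$, then by $\SS\circ R=R\circ\SS$ reversal fixes the fibre $\{w,C(w)\}$, so either $w$ is an $R$-palindrome or $E(w)=w$; a short computation on the central letter shows the first alternative holds exactly when $z$ is not centered at the letter $1$ (even length, or odd length with central letter $0$) and the second exactly when $z$ is centered at $1$. Conversely, $\SS$ sends every $R$-palindrome of $\ww$ to an $R$-palindrome of $\SS(\ww)$ not centered at $1$, and every $E$-palindrome of $\ww$ to an $R$-palindrome of $\SS(\ww)$ centered at $1$; since no factor of positive length is both $R$- and $E$-palindromic (that would force $C(w)=w$), these pull-back maps are surjective and exactly $2$-to-$1$, yielding
$$
\P^R_\ww(n+1)+\P^E_\ww(n+1)=2\,\P^R_{\SS(\ww)}(n)\qquad\text{for all }n\ge 0 .
$$

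Now I would combine these identities with the known characterization (valid for recurrent words closed under reversal, hence for $\SS(\ww)$, and derivable from \Cref{thm:minus1}) that $R$-richness is equivalent to $\P^R(n)+\P^R(n+1)=\C(n+1)-\C(n)+2$ for all $n\ge 1$, and almost-$R$-richness to the same equality holding for all but finitely many $n$. Substituting $\C_{\SS(\ww)}(n)=\tfrac12\C_\ww(n+1)$ and $\P^R_{\SS(\ww)}(n)=\tfrac12\big(\P^R_\ww(n+1)+\P^E_\ww(n+1)\big)$ and clearing the factor $\tfrac12$ turns, after re-indexing, the $R$-rich equality for $\SS(\ww)$ into precisely the defining $H$-rich equality for $\ww$ — but only at levels $n\ge 2$ (the ``$+2$'' becoming the ``$+4$'' in accordance with $|H|=2|\{\id,R\}|$). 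It then remains to observe that the $n=1$ instance of the $H$-rich equality is automatic for any binary word closed under $E$: one has $\P^R_\ww(1)=\C_\ww(1)=2$ and $\P^E_\ww(1)=0$ (such a word uses both letters and no single letter is an $E$-palindrome), while each of the four binary words of length $2$ is either an $R$-palindrome ($00,11$) or an $E$-palindrome ($01,10$) and never both, so $\P^R_\ww(2)+\P^E_\ww(2)=\C_\ww(2)$, which makes the $n=1$ equality hold identically. Hence $\SS(\ww)$ is $R$-rich (resp. almost $R$-rich) if and only if $\ww$ is $H$-rich (resp. almost $H$-rich).

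I expect the palindromic dictionary to be the main obstacle: one must verify carefully that the correspondence ``$R$-palindrome of $\SS(\ww)$ $\leftrightarrow$ unordered pair of $R$-palindromes or of $E$-palindromes of $\ww$'' is simultaneously surjective and exactly $2$-to-$1$, and keep precise track of how the central letter of a palindrome downstairs dictates which symmetry type appears upstairs. A secondary point is making sure the factor-/palindromic-complexity characterization of (almost) richness can be invoked even though $\ww$ is not assumed uniformly recurrent; this is taken care of by the remark that $\ww$ — and therefore $\SS(\ww)$ — is recurrent and closed under reversal.
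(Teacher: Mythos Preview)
The paper does not prove this theorem: it is quoted from \cite{PeSta3} and used as an imported tool in \Cref{sec:Application}, with no argument supplied here. There is therefore nothing in the present paper to compare your proposal against.

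For what it is worth, your outline is sound and is the natural route. The two identities $\C_\ww(n{+}1)=2\,\C_{\SS(\ww)}(n)$ and $\P^R_\ww(n{+}1)+\P^E_\ww(n{+}1)=2\,\P^R_{\SS(\ww)}(n)$ follow exactly as you describe (the fibres of $\SS$ on factors of $\ww$ are the pairs $\{w,C(w)\}$, and reversal acts on a fibre either trivially or by swapping, according to the central letter of the image). Substituting them converts the $H$-rich equality for $\ww$ at level $n$ into the $R$-rich equality for $\SS(\ww)$ at level $n{-}1$; since the latter is automatic at level $0$, your separate verification of the case $n=1$ is in fact unnecessary, though harmless. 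The only genuine soft spot is the one you flag yourself: the equivalence between almost richness and the complexity identity holding with at most finitely many exceptions is typically stated under uniform recurrence, so for the ``almost'' half one must either invoke a version valid under mere closure under reversal or add a short direct argument.
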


In order to produce an almost $H$-rich word using the last two claims, we need to find a suitable uniformly recurrent binary almost $R$-rich word $\vv$ which  contains infinitely many $R$-palindromes centered at the letter $1$ and infinitely many $R$-palindromes not centered at the letter $1$.
We will apply a well chosen morphism $\pi \in P_{ret}$ on an episturmian word to produce such a word $\vv$.

Let us recall that  any  Arnoux--Rauzy  word over $\A$ has infinitely many palindromes centered at  $a$ for each $a \in \A \cup\{\varepsilon\}$, see \cite{DaZa03}.
Consider  $\pi: \A^* \to \B^*$   a morphism from the class $P_{ret}$ with respect to a palindrome $r$.
\begin{itemize}
\item If $v$ is a palindrome of even length, the palindrome $\pi(v)r$ is centered at the same letter as $r$.
\item If $v$  is a palindrome of odd length, centered at $a\in \A$, then $\pi(v)r$ is centered at the same letter as $\pi(a)r$.
\end{itemize}
Thus, when we have an Arnoux--Rauzy word $\uu$ over $\A$, we can always pick a morphism $\pi: \A^* \to \{0,1\}^*$ such that $\vv = \pi(\uu)$ has infinitely many palindromes centered at $1$ and infinitely many palindromes not centered at $1$.
Using \Cref{Th:PretMapsToRich}, we have that $\vv$ is almost $R$-rich.
Since $\uu$ is uniformly recurrent, if $\ww$ is a word such that $\vv = \S(\ww)$, then it is closed under all elements of $H$ by \Cref{centered}.
\Cref{thm:bin1} implies that $\ww$ is almost $H$-rich.

Using \Cref{ARprojekcerich}, we can produce $H$-rich words.

\begin{proposition} \label{HrichP}
Let $\uu$ be a ternary Arnoux--Rauzy word over the alphabet $\A=\{0,1,2\}$ and
$\zeta$ be a binary projection over $\A$. 
If   $\ww \in \{0,1\}^\N$ is a preimage of $\zeta(\uu)$ by $S$, i.e.,
$S(\ww) = \zeta(\uu)$, then  $\ww$ is $H$-rich.
\end{proposition}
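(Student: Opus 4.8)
The plan is to mirror the argument given just above \Cref{HrichP} for the almost $H$-rich case, but now invoke \Cref{ARprojekcerich} instead of \Cref{Th:PretMapsToRich}, and \Cref{thm:bin1} in its exact (not almost) form. First I would record that $\zeta(\uu)$ is $R$-rich: this is immediate from \Cref{ARprojekcerich} since $\uu$ is a ternary Arnoux--Rauzy word, hence a ternary episturmian word, and $\zeta$ is a binary projection from $\A$. I would also note $\zeta(\uu)$ is uniformly recurrent, since $\uu$ is uniformly recurrent (Arnoux--Rauzy words are) and a non-erasing morphism preserves uniform recurrence; this is needed because both \Cref{centered} and \Cref{thm:bin1} are stated for words in the orbit of $S$ with $S(\ww)$ uniformly recurrent.

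The heart of the matter is checking the palindrome-centering hypothesis of \Cref{centered}, namely that $\L(\zeta(\uu))$ contains infinitely many $R$-palindromes centered at $1$ and infinitely many not centered at $1$ (equivalently, centered at $0$ or at $\varepsilon$). Here I would use that $\uu$, being ternary Arnoux--Rauzy, has infinitely many palindromic factors centered at each $a \in \A \cup \{\varepsilon\}$ (this is the fact from \cite{DaZa03} quoted in the text). The binary projection $\zeta$ sends the letter $B$-class to $B$ and the $\A'$-class to $A$; but $\zeta$ need not preserve palindromicity letter-for-letter the way a $P_{ret}$ morphism with its tail palindrome does, so I cannot argue quite as cleanly as in the $P_{ret}$ discussion. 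Instead I would proceed directly: rename the output letters of $\zeta$ as $\{0,1\}$ (matching the statement), and observe that $\zeta$ applied to a palindromic factor of $\uu$ is a palindromic factor of $\zeta(\uu)$, since $\zeta$ is a letter-to-letter morphism and letter-to-letter morphisms commute with $R$. So from the infinitely many palindromes of $\uu$ centered at, say, the letter that $\zeta$ maps to $1$, we get infinitely many palindromes of $\zeta(\uu)$ centered at $1$; and from the infinitely many palindromes of $\uu$ of even length (centered at $\varepsilon$) we get infinitely many palindromes of $\zeta(\uu)$ of even length, which are not centered at $1$ (they are centered at $\varepsilon$). The only subtlety is guaranteeing that there genuinely is a letter mapped to $1$ and that even-length palindromes of $\uu$ stay of even length — both are obvious since $\zeta$ is length-preserving and surjective onto $\{0,1\}$ (a binary projection from $\A$ with $\A'$ a proper nonempty subset — one should note that $\A' \neq \emptyset$ may be assumed, as otherwise the statement of a ``binary'' projection is degenerate; if $\A' = \emptyset$ then $\zeta(\uu) = 1^\omega$ and the claim is trivial, and if $\A' = \A$ it is excluded by properness).

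With the centering hypothesis verified, \Cref{centered} gives that any $\ww$ with $S(\ww) = \zeta(\uu)$ is closed under all elements of $H$. Then \Cref{thm:bin1} applies: since $\zeta(\uu) = S(\ww)$ is $R$-rich, $\ww$ is $H$-rich. I expect the main obstacle to be purely bookkeeping: the statement of \Cref{ARprojekcerich} uses output alphabet $\{A,B\}$ while \Cref{centered} and \Cref{thm:bin1} use $\{0,1\}$ and single out the literal letter $1$; one must fix the identification so that the palindromes guaranteed by \cite{DaZa03} land on the distinguished letter. This is resolved by simply choosing the naming convention of $\zeta$ so that the image letter corresponding to $\A'$ is called $1$ (which is harmless, as richness and all the hypotheses are invariant under renaming letters). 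Below I give the argument.

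\begin{proof}[Proof of \Cref{HrichP}]
If $\A' = \emptyset$ (resp. $\A' = \A$ is excluded by properness), then $\zeta(\uu)$ is a constant word and the statement is trivial; so assume $\emptyset \neq \A' \subsetneq \A$. By renaming the two output letters if necessary, we may assume that $\zeta$ maps the letters of $\A'$ to $1$ and the remaining letters of $\A$ to $0$, so that $\zeta(\uu) \in \{0,1\}^\N$.

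Since $\uu$ is a ternary Arnoux--Rauzy word, it is in particular a ternary episturmian word, and \Cref{ARprojekcerich} gives that $\zeta(\uu)$ is $R$-rich. Moreover $\uu$ is uniformly recurrent, and as $\zeta$ is a non-erasing morphism, $\zeta(\uu)$ is uniformly recurrent as well.

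We claim that $\L(\zeta(\uu))$ contains infinitely many $R$-palindromes centered at the letter $1$ and infinitely many $R$-palindromes not centered at the letter $1$. Since $\zeta$ is a letter-to-letter morphism, it commutes with the reversal $R$: for any $v \in \A^*$ we have $\zeta(R(v)) = R(\zeta(v))$. Hence $\zeta$ maps every palindromic factor of $\uu$ to a palindromic factor of $\zeta(\uu)$, and it preserves length and the parity of length. By \cite{DaZa03}, the Arnoux--Rauzy word $\uu$ has infinitely many palindromic factors centered at $a$ for each $a \in \A \cup \{\varepsilon\}$. Pick $a \in \A'$ (possible since $\A' \neq \emptyset$): the infinitely many palindromic factors of $\uu$ centered at $a$ have odd length and their central letter is $a$, so their images under $\zeta$ are palindromic factors of $\zeta(\uu)$ of odd length with central letter $\zeta(a) = 1$; these are pairwise distinct because $\zeta$ preserves length. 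Thus $\L(\zeta(\uu))$ has infinitely many $R$-palindromes centered at $1$. On the other hand, $\uu$ has infinitely many palindromic factors of even length (those centered at $\varepsilon$), and their images under $\zeta$ are palindromic factors of $\zeta(\uu)$ of even length, hence centered at $\varepsilon$, which are in particular not centered at the letter $1$; again these are infinitely many distinct factors since $\zeta$ preserves length. This proves the claim.

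Now let $\ww \in \{0,1\}^\N$ be any word with $S(\ww) = \zeta(\uu)$. Since $S(\ww) = \zeta(\uu)$ is uniformly recurrent and its language contains infinitely many $R$-palindromes centered at $1$ and infinitely many not centered at $1$, \Cref{centered} implies that $\ww$ is closed under all elements of $H$. Since $S(\ww) = \zeta(\uu)$ is $R$-rich, \Cref{thm:bin1} implies that $\ww$ is $H$-rich.
\end{proof}
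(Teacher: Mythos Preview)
Your proof is correct and follows essentially the same route as the paper: establish uniform recurrence and the palindrome-centering condition for $\zeta(\uu)$ via the \cite{DaZa03} fact on Arnoux--Rauzy words, apply \Cref{centered} to get closure under $H$, then invoke \Cref{thm:bin1} together with the $R$-richness of $\zeta(\uu)$ (from \Cref{ARprojekcerich}) to conclude $H$-richness of $\ww$. Your version is in fact more careful than the paper's, which leaves implicit both the appeal to \Cref{ARprojekcerich} in the final step and the identification of the output alphabet $\{A,B\}$ with $\{0,1\}$; your handling of the degenerate case $\A'=\emptyset$ and the explicit renaming are welcome clarifications, not deviations.
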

\begin{proof}  We use  the two well-known properties of Arnoux--Rauzy words which were already mentioned:
any Arnoux--Rauzy word is uniformly recurrent and any Arnoux--Rauzy word contains infinitely many palindromes centered at $a$, where $a \in \A \cup \{\varepsilon\}$.
 Therefore,  $\zeta(\uu)$ is uniformly recurrent and  contains infinitely many palindromes centered at $1$ and infinitely many palindromes centered at $\varepsilon$. Due to  \Cref{centered}, the word $\ww$ is closed under all elements of $H$.  According to \Cref{thm:bin1}, $\ww$ is $H$-rich.
\end{proof}

\section{Comments and open questions}

As already mentioned in the introduction, our computer experiments suggest that we may conjecture a more general statement than   \Cref{ARprojekcerich}:

\begin{conjecture}
Let $\uu$ be an episturmian word over $\A$ and $\zeta$ be a binary projection from $\A$.
The word $\zeta(\uu)$ is rich.
\end{conjecture}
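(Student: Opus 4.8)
The plan is to prove the conjecture by reducing, via derivated words, to the cases already handled in \Cref{lemma:ABdom} and \Cref{lemma:ABnedom}, but now over an alphabet $\A$ of arbitrary size $k = \#\A$. Let $\zeta$ be the binary projection associated with a proper subset $\A' \subset \A$, and let $x \in \A$ be a letter whose $\zeta$-value differs from that of all other letters (such an $x$ exists precisely when $\A'$ or its complement is a singleton; the general case needs a slightly different reduction — see below). First I would take $\uu$ standard episturmian without loss of generality, fix $x$, pass to a derivated word $\vv$ of $\uu$ with respect to the factor $x$ (using \Cref{AR_rw_projekce}) with corresponding morphism $\Psi \in P_{ret}$, and observe $\zeta(\uu) = \zeta\Psi(\vv)$. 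The goal is then to show $\pi := \zeta\Psi$ has the shape required by one of the two lemmas.

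The two subcases mirror the ternary proof. If $x$ is the separating letter of $\uu$, then every return word of $x$ is either $x$ itself or $xy$ with $y \neq x$, so one may choose $\Psi$ with $\Psi(x) = x$ and $\Psi(y)$ having a single occurrence of $x$ at the front for each other letter; applying $\zeta$ gives $\pi(x) = A$ and $\pi(y) = AB^{q_y}$ — this is the hypothesis of \Cref{lemma:ABdom} once we check injectivity, which holds since $\Psi$ is. Wait — in the ternary case this fell under \Cref{lemma:ABnedom} because all $q_y$ were forced to be $1$; on a larger alphabet the return words of the separating letter can have different lengths, so $\pi$ genuinely lands in the \Cref{lemma:ABdom} form, and that lemma already allows $\uu$ over any alphabet, so this subcase is clean. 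If instead $x$ is \emph{not} the separating letter, then by \Cref{lem:epist_navraty_k_pismenum} (applied with $\A' = \{x\}$) every return word of $x$ starts with $x$ and, crucially, the analogue of \Cref{AR:delky}(1) — no two return words of a non-separating letter have the same length — must be re-established over $\A$; granting that, each $\Psi(y)$ begins with $x$ and all have distinct lengths, so $\pi(y) = AB^{q_y}$ with the $q_y$ pairwise distinct, again matching \Cref{lemma:ABdom}.

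The main obstacle is exactly the length-distinctness of return words, i.e.\ generalizing \Cref{AR:delky} beyond the ternary alphabet. As \Cref{delsirw} explicitly warns, for $\#\A \geq 4$ the return words of a bispecial factor $b$ can coincide in length: by the characterization of \cite{Ju05}, the palindromic closures $(bx)^R$ and $(by)^R$ have equal length exactly when neither $x$ nor $y$ occurs in $b$, and on a large alphabet the relevant bispecial factor containing the non-separating letter $x$ need not contain all other letters. Thus the clean reduction $\pi(y) = AB^{q_y}$ with distinct exponents can fail, and the image $\zeta(\uu)$ may contain several return words of $0^{\ell}$-like blocks of the same length, which breaks the $P_{ret}$-with-respect-to-$A^{\ell+1}$ argument. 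To push the conjecture through one would need either a more refined analysis of the bilateral order of the $B$-free bispecial factors $B^k$ directly (showing \eqref{eq:BilateralProRich} still holds even when exponents collide, perhaps because colliding $q_y$ correspond to letters $y$ whose $\Psi(y)$ differ as words, so the relevant both-sided extensions still distribute as in \eqref{eq:extensions}), or a strengthening of \Cref{lem:zobecneneReturn} that controls $\E$-extensions when $\E$ is large. I expect the resolution to combine \Cref{lem:epis_Pret} (which handles every bispecial factor containing the letter $A$, regardless of alphabet size) with a careful case analysis of the finitely many $A$-free bispecial factors, the latter being where the combinatorial subtlety flagged in \Cref{delsirw} genuinely bites.
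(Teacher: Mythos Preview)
This statement is a \emph{conjecture} in the paper, not a theorem: the paper explicitly leaves it open, supported only by computer experiments (see the final section). There is therefore no proof in the paper to compare your proposal against.

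Your proposal is not a proof either, and you essentially acknowledge this. But beyond the obstruction you correctly identify via \Cref{delsirw}, there are two further gaps you should be aware of. First, your claim in the separating-letter subcase is incorrect: if $x$ is the separating letter, then \emph{every} return word of $x$ has length $1$ or $2$ (since $x$ occurs in every factor of length $2$, consecutive occurrences of $x$ are separated by at most one letter). Hence on a $k$-letter alphabet with $k \geq 3$ the composite $\pi = \zeta\Psi$ sends every non-$x$ letter to $AB$ and is \emph{not} injective, so it is not of class $P_{ret}$ and \Cref{lemma:ABdom} does not apply. This is precisely why the paper needs the separate \Cref{lemma:ABnedom}, whose proof genuinely uses the ternary hypothesis (through \Cref{AR:delky}(2)) and does not extend as written. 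Second, you restrict at the outset to the case where $\A'$ or its complement is a singleton, promising ``a slightly different reduction'' for the general case, but never supply one; for $\#\A \geq 4$ a binary projection can identify two letters with $A$ and two with $B$, and nothing in your outline covers that situation.

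In short: you have correctly located one of the reasons the paper stops at the ternary case (the failure of \Cref{AR:delky} flagged in \Cref{delsirw}), but the separating-letter case is also obstructed, and the non-singleton-fiber case is untouched. The conjecture remains open.
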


Besides this conjecture, one may inquire about the class of binary words that is obtained by applying a binary projection on an episturmian word.
Study of special factors, factor complexity and structure of return words may provide insight into this class.
It would also be of interest to see if there is a relation to some known class of binary words.


\section*{Acknowledgments}

The author acknowledges financial support from the Czech Science Foundation grant GA\v CR 13-35273P.
The computer experiments were performed using the open-source algebra system \texttt{SAGE} \cite{sage_6}.

\bibliographystyle{siam}
\IfFileExists{biblio.bib}{\bibliography{biblio}}{\bibliography{../!bibliography/biblio}}

\end{document}